\documentclass{article}
\usepackage{graphicx} 
\usepackage[utf8]{inputenc}
\usepackage{amsmath,amsthm,amsfonts,amssymb,mathbbol}
\usepackage{slashed}
\usepackage{appendix}
\usepackage{tikz}
\usepackage{epsfig}
\usepackage{mathrsfs}
\usepackage{amscd} 
\usepackage[all]{xy}

\usepackage{appendix}

\usepackage{dsfont, upgreek}
\usepackage{mathtools}
\usepackage{epsfig}

\usepackage[all]{xy}
\usepackage{tikz}
\usepackage{tikz-cd}
\usetikzlibrary{matrix,arrows,decorations.pathmorphing}
\usetikzlibrary{topaths}

\theoremstyle{plain}
\newtheorem{theorem}{Theorem}[section]
\newtheorem{proposition}[theorem]{Proposition}
\newtheorem{lemma}[theorem]{Lemma}
\newtheorem{corollary}[theorem]{Corollary}

\theoremstyle{definition}
\newtheorem{definition}[theorem]{Definition}

\theoremstyle{remark}
\newtheorem{remark}[theorem]{Remark}

\newcommand{\D}{\slashed{D}}

\begin{document}

\title{Equivariant Localization of $K$-Homological Euler Class for Almost Connected Lie Groups}

\date{\today}
\author{Hongzhi Liu \footnote{School of Mathematics, Shanghai University of Finance and Economics, 777 Guoding Rd, Shanghai 200433, P. R. China.  \newline Email: liu.hongzhi@sufe.edu.cn.  This is the corresponding author.}, Hang Wang  \footnote{Research Center for Operator Algebras, School of Mathematical Sciences 
East China Normal University, 3663 North Zhongshan Rd, 
Shanghai 200062, P.R.China \newline Email: wanghang@math.ecnu.edu.cn}, 
Zijing Wang \footnote{Shanghai Institute for Mathematics and Interdisciplinary Sciences (SIMIS), Research Institute of Intelligent Complex Systems, Fudan University, Shanghai {\rm 200433}, China. \newline Email: wzj@simis.cn}, 
Shaocong Xiang \footnote{School of Mathematics, Shanghai University of Finance and Economics, 777 Guoding Rd, Shanghai 200433, P. R. China. \newline Email: xiangshaocong@mail.sufe.edu.cn} }

\maketitle

\begin{abstract}
 
Using the Witten deformation and localization algebra techniques, we compute the $G$-equivariant $K$-homology class of the de Rham operator on a proper cocompact $G$-spin manifold, where $G$ is an almost connected Lie group. By applying a $G$-invariant Morse–Bott perturbation, this class is localized  near the zero set of the perturbation and can be identified explicitly with an element in the representation rings associated to some isotropy subgroups. The result yields an equivariant Poincar\'e–Hopf formula and supplies concise tools for equivariant index computations.

\end{abstract}

\section{Introduction}

The Witten deformation \cite{witten1982supersymmetry} and the analytic localization framework have long been employed to compute indices of de Rham operators (see \cite{mb1,bismut1991complex,z}). We encode these foundational ideas in the formulation of an equivariant $K$-homology cycle represented by the de Rham operator, in the context of Yu’s localization algebra \cite{yu20book,yu1997localization}, extended to an equivariant setting with respect to an almost connected Lie group.
This extension provides a flexible analytic framework for studying equivariant index theory on noncompact manifolds equipped with proper actions.

Localization algebras serve as powerful models for $K$-homology. They have been extensively applied to the study of the Baum-Connes conjecture  and the Novikov conjecture \cite{Yu98asymptotic, Yu00coarseemdedding, KasparovYu06, Yu10characterization, ChenWangYu13, ChenWangYu15, FuWangYu20, GongWuYu21, DengWangYu23, DengGuoWangYu25}, as well as to the definition and analysis of the higher rho invariant \cite{XieYu14, Zeidler16, XieYu17rhoandmoduli, WeibergerXieYu21, ChenLiuYu20, XieYuZeidler21, GuoXieYu21, LiuWang21, XieYu21delocalized, ChenLiuWangYu22, ChenWangXieyu23, WangXieYu23}, among other geometric and topological contexts. Our results furnish a new example illustrating the versatility of the localization algebra.

Our motivation for studying the $K$-homological formulation of the Witten deformation traces back to the work of Rosenberg \cite{rosenberg99}, where he employed the spirit of Witten deformation to show that the $K$-homology class of the de Rham operator is, in a sense, as trivial as possible—namely, it arises from the one-point embedding. This perspective was later refined in the equivariant setting by Lück and Rosenberg \cite{luckroseberg03}, who showed that for a countable discrete group $G$ and a proper cocompact $G$-manifold $M$ without boundary, equipped with a $G$-invariant Riemannian metric, the $G$-equivariant $K$-homology class of the de Rham operator is determined by the so-called universal equivariant Euler characteristic, which can be described in terms of representation rings of some isotropy subgroups.

In \cite{liu2024equivariant}, we provided a new proof of the main results of L\"uck and Rosenberg by employing Yu’s localization algebra framework \cite{yu1997localization} and the pairing between $K$-homology and $K$-theory in this context \cite[Chapter 9]{yu20book}. More precisely, let $G$ be a countable discrete group acting properly and cocompactly on a complete even-dimensional manifold $M$. The de Rham operator $D$ on $M$ then defines a class $[D]$ in the $K$-theory of the $G$-equivariant localization algebra $C^*_L(M)^G$, which is canonically isomorphic to the $G$-equivariant $K$-homology of $M$.
Let $A$ be a $G$-invariant cotangent vector field on $M$ with only nondegenerate zeros. We observed that, due to the uniform invertibility of the Witten-type deformation $\frac{D}{t} + A$ for $t \in [1, \infty)$, the class $[D] = [D + A]$ can be localized to neighborhoods of the zero set of $A$. This localization reduces the computation of $[D]$ to the pairing between $K$-homology and $K$-theory in Euclidean space, specifically that of $\mathbb{R}^n$.

In this paper, we generalize this approach to the setting where \(G\) is an almost connected Lie group acting properly and cocompactly on a manifold without boundary. A new phenomenon appears in this non-discrete setting. Namely, the zero set of the equivariant vector field associated with a \(G\)-invariant Morse--Bott \(1\)-form is in general no longer a discrete set of points; even in favorable situations, it consists of non-trivial zero submanifolds. As already indicated in Bismut's work~\cite{mb1}, such zero submanifolds create substantial difficulties for the localization of the de~Rham operator.

The key point is that, after localization, the contribution of each
zero component is governed not merely by the geometry of the component
itself, but more fundamentally by the transverse geometry encoded in
its normal bundle. To retain this normal information, we introduce the
\(G\)-equivariant vertically representable localization algebra
\(RVC_L^*(E)^G\), associated to a \(G\)-equivariant vector bundle \(E\)
over a zero component; see Definition~\ref{def rvcl}. This algebra
provides a natural framework in which the localized de~Rham class can
be represented on the total spaces of the normal bundles of the zero
components.

Our main theorem describes the localization of the
\(G\)-equivariant de~Rham class and the structure of the resulting
local contributions. It first shows that the \(G\)-equivariant
\(K\)-homology class of the de~Rham operator localizes to the normal
bundles of the zero components. Through equivariant Poincar\'e duality
and induction, the contribution associated to \(Z_i=G/H_i\) is then
represented by a class
\[
\kappa_i\in K_{H_i}^0(V_i),
\]
where \(V_i\) is the normal fiber over \(eH_i\). Moreover, when the
normal bundle of \(Z_i\) is \(G\)-orientable, the class \(\kappa_i\)
admits an explicit description in terms of the local degree and the
half-spin representations associated to the stabilizer action on
\(V_i\).

\begin{theorem}\label{thm main}
Let \(G\) be an almost connected Lie group acting properly,
cocompactly, and isometrically on a \(G\)-spin manifold \(X\).
Suppose that \(X\) admits a \(G\)-invariant Morse--Bott \(1\)-form
\(A\) whose zero set decomposes as
\[
Z=\bigsqcup_{i=1}^k Z_i
=\bigsqcup_{i=1}^k G/H_i,
\]
where each \(H_i\leq G\) is compact. For each \(i\), let
\(E_i\to Z_i\) be the normal bundle of \(Z_i\) in \(X\).

\textup{(1)}
The \(G\)-equivariant \(K\)-homology class \([d+d^*]\) lies in the
image of the homomorphism
\begin{equation}\label{eq:localization-map}
\iota_*:
\bigoplus_{i=1}^k K_0\!\bigl(RVC_L^*(E_i)^G\bigr)
\longrightarrow
K_0\!\bigl(C_L^*(X)^G\bigr),
\end{equation}
induced by \(G\)-equivariant tubular neighborhood embeddings
\(E_i\hookrightarrow X\).

\textup{(2)}
Let \(V_i=(E_i)_{eH_i}\) be the normal fiber over \(eH_i\in G/H_i\). 
The \(G\)-equivariant Poincar\'e duality isomorphism and the induction 
isomorphism yield
\[
K_0\!\bigl(RVC_L^*(E_i)^G\bigr)
\cong K_G^0(E_i)
\cong K_{H_i}^0(V_i).
\]
Under these identifications, the homomorphism
\eqref{eq:localization-map} corresponds to
\begin{equation}\label{eq:localization-map-2}
T:
\bigoplus_{i=1}^k K_{H_i}^0(V_i)
\longrightarrow
K_0\!\bigl(C_L^*(X)^G\bigr).
\end{equation}

For each \(i\), let \(\tau_i\) denote the \(H_i\)-equivariant twisting
determined by the Clifford algebra \(\mathrm{Cl}(V_i)\). The equivariant
twisted Thom isomorphism gives a natural identification
\[
K_{H_i}^0(V_i)
\cong K_{H_i}^{\tau_i}(\mathrm{pt})
\cong R^{\tau_i}(H_i),
\]
where \(R^{\tau_i}(H_i)\) is the \(\tau_i\)-twisted representation group of \(H_i\). There exist classes
\[
\kappa_i\in R^{\tau_i}(H_i),\qquad 1\leq i\leq k,
\]
such that, upon identifying them with their corresponding classes in
\(K_{H_i}^0(V_i)\),
\[
T\left(\bigoplus_{i=1}^k\kappa_i\right)=[d+d^*].
\]

\textup{(3)}
If, for some \(i\), the zero submanifold \(Z_i\) is
\(G\)-orientable, then the corresponding class \(\kappa_i\) can be chosen to be
\[
\kappa_i=
\begin{cases}
0,
& \dim V_i \text{ is odd},\\[4pt]
\deg\bigl(A|_{Z_i}\bigr)
\bigl([s_i^+]-[s_i^-]\bigr),
& \dim V_i \text{ is even},
\end{cases}
\]
where \(s_i^\pm\) are the half-spin representations of the double cover
\(\widetilde H_i\) of \(H_i\) defined in Definition~\ref{def:pin-double-cover}, and
\(\deg\bigl(A|_{Z_i}\bigr)\) is defined in Proposition~\ref{sg}.
\end{theorem}

\begin{remark}
The classes \(\kappa_i\) in Theorem~\ref{thm main} are not uniquely
determined in general. Indeed, the homomorphism
\[
T:
\bigoplus_{i=1}^k K_{H_i}^0(V_i)
\longrightarrow
K_0\!\bigl(C_L^*(X)^G\bigr)
\]
need not be injective, so different collections of local classes may have
the same image under \(T\). Thus, the significance of
Theorem~\ref{thm main}\textup{(3)} is not that it determines a canonical
choice of \(\kappa_i\), but rather that it provides one particular choice
for which the local contribution admits an explicit
representation-theoretic description.

If the \(H_i\)-representation \(V_i\) is even-dimensional and admits an
\(H_i\)-equivariant spin structure, the corresponding Clifford twist is
trivialized, and the equivariant Thom isomorphism identifies
\(K_{H_i}^0(V_i)\) with the ordinary representation ring \(R(H_i)\).
Consequently, the local class may be expressed in terms of ordinary
\(H_i\)-representations. In general, however, the Clifford algebra
\(\mathrm{Cl}(V_i)\) determines a possibly nontrivial equivariant twist
\(\tau_i\), and the Thom isomorphism identifies
\[
K_{H_i}^0(V_i)
\cong K_{H_i}^{\tau_i}(\mathrm{pt})
\cong R^{\tau_i}(H_i).
\]
Here \(R^{\tau_i}(H_i)\) denotes the Grothendieck group of
\(\tau_i\)-twisted \(H_i\)-representations, which may equivalently be
described in terms of suitably graded \(\mathrm{Cl}(V_i)\)-modules
equipped with a compatible \(H_i\)-action. Thus, even when \(V_i\) does
not admit an equivariant spin structure, the local contribution still
has a natural representation-theoretic interpretation, but in a
twisted rather than an ordinary representation ring.

It is also useful to compare Theorem~\ref{thm main} with the discrete-group case studied in~\cite{liu2024equivariant}. There, the zero set consists of isolated points, and the local contribution associated to a zero orbit is given by the local index
\[
\operatorname{ind}(\Xi,x_i)\in\{\pm1\},
\]
of a vector field \(\Xi\) multiplied by the class of the trivial representation of the stabilizer \(H_i\). In the present Morse--Bott setting, the two factors
\[
\deg\bigl(A|_{Z_i}\bigr)
\qquad\text{and}\qquad
[s_i^+]-[s_i^-]
\]
should be regarded as the corresponding analogues for a
positive-dimensional zero submanifold. The first generalizes the local
index at an isolated zero by recording the transverse degree of \(A\)
along \(Z_i\), whereas the second records the action of the stabilizer on the normal directions and replaces the trivial stabilizer representation appearing in the isolated-zero case. Accordingly,
Theorem~\ref{thm main} may be viewed as a refinement of the
discrete-group localization formula, in which the local sign and the
trivial representation are replaced by their natural
normal-directional counterparts.
\end{remark}

The proof of Theorem~\ref{thm main} builds on the localization-algebraic
formulation of Witten deformation in equivariant \(K\)-homology developed
in~\cite{liu2024equivariant}, extending that framework from discrete group
actions to the present almost connected Lie group setting. The other main
ingredients are equivariant Poincar\'e duality, induction, and a local
analysis of the normal representations along the zero orbits.

First, we introduce the \(G\)-equivariant localization algebra
\(C_L^*(X)^G\) in Definition~\ref{def cl}. We show that, for proper
cocompact actions, there is an isomorphism
\[
K_0\bigl(C_L^*(X)^G\bigr)\cong K_0^G(X);
\]
see Theorem~\ref{thm qualify our k homo}. This identifies the class of
the de~Rham operator with an element
\[
[D]\in K_0\bigl(C_L^*(X)^G\bigr).
\]

We then choose a \(G\)-invariant Morse--Bott \(1\)-form \(A\) on \(X\);
see~\cite{equivariantdifferentialtopology} and
Definition~\ref{def morsebott covector filed}. Its zero components are
assumed to be nondegenerate \(G\)-orbits. Using the uniform invertibility
of the Witten perturbation
\[
\frac{D}{t}+A,\qquad t\in[1,\infty),
\]
away from the zero set, we localize the class \([D]\) to tubular
neighborhoods of the zero components. Equivalently, \([D]\) lies in the
image of the homomorphism
\[
\iota_*:
\bigoplus_i K_0\!\bigl(RVC_L^*(E_i)^G\bigr)
\longrightarrow
K_0\!\bigl(C_L^*(X)^G\bigr),
\]
where \(E_i\) is the normal bundle of \(Z_i\) in \(X\); see
Definition~\ref{def rvcl} and Theorem~\ref{thm the cut off}. This proves
the first part of Theorem~\ref{thm main}.

We next establish Poincar\'e duality in the vertically representable
setting.

\begin{theorem}\label{thm pd for bundle in intro}
Let \(M\) be a Riemannian manifold equipped with a proper, cocompact,
and isometric action of \(G\), and let \(E\to M\) be a
\(G\)-equivariant vector bundle. Assume that the total space of \(E\)
is a \(G\)-spin manifold. Then the homomorphisms
\begin{align*}
\otimes [\D^E]&:
K_G^0(E)\longrightarrow K_0\!\bigl(RVC_L^*(E)^G\bigr),\\
\otimes [\theta_E]&:
K_0\!\bigl(RVC_L^*(E)^G\bigr)\longrightarrow K_G^0(E),
\end{align*}
defined in Subsection~\ref{subsec thom and poincare}, are inverse to
each other.
\end{theorem}

A principal difficulty in the proof of this theorem is that the
cutting-and-pasting argument, which is a central tool in the study of
equivariant localization algebras for discrete groups, does not apply
directly to continuous group actions.

Applying Theorem~\ref{thm pd for bundle in intro} to the normal bundles
identifies the localized classes with elements of
\[
\bigoplus_i K_G^0(E_i).
\]
For a zero orbit \(Z_i=G/H_i\), the normal bundle is of the form
\[
E_i\cong G\times_{H_i}V_i,
\qquad
V_i=(E_i)_{eH_i}.
\]
Equivariant induction therefore gives a natural isomorphism
\[
K_G^0(E_i)\cong K_{H_i}^0(V_i).
\]
Composing these identifications with the embedding-induced map
\(\iota_*\), we obtain the homomorphism
\[
T:
\bigoplus_i K_{H_i}^0(V_i)
\longrightarrow
K_0\!\bigl(C_L^*(X)^G\bigr)
\]
appearing in Theorem~\ref{thm main}. The localized de~Rham class is
thus represented by a collection of classes
\[
\kappa_i\in K_{H_i}^0(V_i).
\]
This proves the second part of the theorem. When the normal bundle is
\(G\)-orientable, a further analysis of the normal operator gives the
explicit representative stated in Theorem~\ref{thm main}\textup{(3)}.
In the general case, the groups \(K_{H_i}^0(V_i)\) may also be expressed
in twisted representation-theoretic terms; this interpretation will be
developed later and is not needed for the localization statement itself.

This explicit identification of the \(K\)-homological Euler class under the map \(T\) opens up several directions for future research. In particular, it provides a potential bridge between index theory and representation-theoretic or topological invariants of almost connected Lie groups. We expect that this framework can be further developed to study the existence of nontrivial \(L^2\)-harmonic forms, and to explore connections with invariants such as \(L^2\)-cohomology, \(L^2\)-Betti numbers, and their delocalized analogues.

The paper is organized as follows. In Section \ref{sec preliminary}, we introduce some fundamental concepts, including the almost connected Lie group equivariant localization algebra, the almost connected Lie group equivariant vertically representable localization algebra, the definition of the equivariant  $K$-homology class of a Dirac-type operator, and provide a brief review of the pairing between equivariant $K$-homology and equivariant $K$-theory, as investigated in \cite[Chapter 9]{yu20book}. We shall also prove Theorem \ref{thm pd for bundle in intro} to establish the Poincar\'e duality between the equivariant $K$-homology of the equivariant $K$-theory. In Section \ref{sec main proof} and Section \ref{subsec preimage computation}, we show that the equivariant $K$-homology class of the de Rham operator can be localized via the Witten deformation, and prove the main theorem, Theorem~\ref{thm main}.

\section*{Acknowledgments} 
HL is supported by NSFC 11901374. HW is supported by grants 23JC1401900 and NSFC 12271165, and in part by the Science and Technology Commission of Shanghai Municipality under grant No. 22DZ2229014.
The authors are grateful to Guoliang Yu for valuable discussions.

\section{Almost connected Lie group equivariant Localization algebra and $K$-homology}\label{sec preliminary}

In this section, we introduce fundamental concepts related to the almost connected Lie group equivariant localization algebra, the equivariant $K$-homology, and the construction of the equivariant $K$-homology classes for Dirac-type operators. Furthermore, we revisit the construction of the pairing  between $K$-homology and $K$-theory in the equivariant setting, as developed by Willett and Yu in \cite{yu20book}.
Throughout this paper, we assume that all group actions are proper, isometric and cocompact.  
\subsection{Equivariant localization algebra}\label{subsec localization algebra}

In this subsection, we introduce the notion of almost connected Lie group equivariant localization algebra. To provide context, we first recall the almost connected Lie group equivariant Roe algebra, as defined and studied in \cite{guo2021equivariant}.

Throughout this subsection, let $X$ be a complete manifold equipped with an isometric, proper and cocompact  action by an almost connected Lie group $G.$

Let $E$ be a $G$-equivariant bundle over $X.$ Set $H(E)$ as the $G-C_0(X)$ module $L^2(E)\otimes L^2(G),$ where $G$ acts on $H(E)$ diagonally, and $C_0(X)$ acts on $H(E)$ by the pointwise multiplication on $L^2(E).$

We now define the concepts of propagation and local compactness for a bounded operator on an ample $G$-equivariant $X$-module.
\begin{definition}
    Let $E$ be a $G$-equivariant bundle over $X.$  Let $T$ be an operator in $B(H(E)).$ 
    \begin{enumerate}
     \item The support of $T,$ denoted by $\text{supp}(T),$
 is defined as the complement of the set
 \[\{(x,y)\in X\times X|\exists f, g\in C_0(X), s.t. f(x)\neq 0, g(y)\neq 0, fTg=0\}.\]
 \item 
 The operator $T$ is said to have finite propagation if
			\[
			\text{sup}\{d(x,y): (x,y)\in \text{supp}(T)\}< \infty
			\]
			where this supremum is referred to as the propagation of $T$, denoted by $\text{propagation}(T)$.
            \item An operator $B$ is said to be $G$-continuous if both $g\mapsto gB$ and $g\mapsto Bg$ are continuous with respect to $g\in G.$ 
\item The operator $T$ is said to be locally $G$-continuous compact if $fT$ and $Tf$ are $G$-continuous compact operators for all $f\in C_0(X)$.
 \end{enumerate}
     
\end{definition}

\begin{definition}\label{def cl}
   Assume that $G$ acts on $X$ properly, isometrically. Let $E$ be a $G$-equivariant bundle over $X.$ 
   \begin{enumerate}
       \item The algebraic $G$-equivariant Roe algebra for $H(E)$ is the algebra $C^*_{alg}(X; H(E))^G,$ consisting of the  locally $G$-continuous compact operators in $B(H(E))^G$ with finite propagation. The equivariant Roe algebra for $H(E)$ is the closure $C^*(X; H(E))^G$ of $C^*_{alg}(X; H(E))^G$ in $B(H(E)).$ This definition is essentially from \cite{guo2021equivariant}.


    \item The $G$-equivariant localization algebra, denoted as 
   \[C_L^*(X, H(E) )^G\] 
   is defined as the $C^*$-algebra generated by all bounded and uniformly norm-continuous functions $f:[1,\infty) \to C^*(X, H(E) )^G$ such that the propagation of $f(t)$ tends to zero as $t\to \infty.$ 
   \end{enumerate}
   
\end{definition} 

We now explain why we restrict ourselves to modules of the form
\[H(E)=L^2(E)\otimes L^2(G) .\]
First,  modules of this form arise naturally in geometric contexts and are admissible in the sense of  \cite{guo2021equivariant}. This implies that the image of the evaluation map
\begin{eqnarray*}
    \text{ev}: C_L^*(X,H(E))^G&\to &C^*(X, H(E) )^G\\
    f&\mapsto & f(1) 
\end{eqnarray*}
lies in $C^*_r(G)\otimes \mathcal{K}(\mathcal{H}),$ given that the action is cocompact. 

Secondly, one can show that the definition of  $C_L^*(X, H(E))^G$ does not depend on the choice of the $G$-equivariant Riemannian bundle $E$ at the $K$-theory level.   To illustrate this, let $X$ and $X'$ be two proper $G$-manifolds, $E$ and $E'$ are $G$-equivariant Riemannian bundles over $X$ and $X'$ respectively. Let $f: X\to X'$ be a $G$-equivariant continuous coarse map (for definition, see \cite[Section 2]{yu1997localization}). 
\begin{lemma}\label{lem covering map}
    For any $\epsilon>0,$ there exists a $G$-equivariant isometry $V_f: L^2(E)\otimes L^2(G)\to  L^2(E')\otimes L^2(G),$ such that the support of $V_f$  
    \[\text{supp}(V_f)\subset \{ (x,x')\in X\times X': d(f(x),x')\leq \epsilon  \}.\]
    Recall that the support of $V_f$ refers to the complement of the set of all points $(x,x')\in X\times X'$ for which there exist functions $g\in C_0(X), g'\in C_0(X')$ such that $gV_f g'=0,$ and $g(x)\neq 0, \ g'(x')\neq 0.$
\end{lemma}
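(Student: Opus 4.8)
The plan is to build $V_f$ in three moves: first produce an \emph{ordinary} (non-equivariant) isometry with $\epsilon$-controlled support between the spatial parts $L^2(E)$ and $L^2(E')$, then use Fell's absorption principle to promote it to a $G$-equivariant isometry between $H(E)=L^2(E)\otimes L^2(G)$ and $H(E')=L^2(E')\otimes L^2(G)$, and finally check that the support estimate survives the promotion. A preliminary observation makes the first move legitimate: because the action is cocompact, isometric, and $f$ is $G$-equivariant, $f$ is automatically \emph{uniformly} continuous. Indeed, a uniform modulus of continuity of $f$ on a compact set $K$ with $GK=X$ propagates to all of $X$: if $d(x_1,x_2)$ is small with $x_1=gy$, $y\in K$, then $d(y,g^{-1}x_2)=d(x_1,x_2)$ is small, hence $d(f(y),f(g^{-1}x_2))$ is controlled, and $d(f(x_1),f(x_2))=d(gf(y),gf(g^{-1}x_2))=d(f(y),f(g^{-1}x_2))$.

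For the non-equivariant input I would invoke the classical covering-isometry lemma of coarse geometry (see e.g. \cite{yu1997localization} and \cite[Chapter~4]{yu20book}): $L^2(E')$ is an ample $X'$-module (no nonzero element of $C_0(X')$ acts compactly on $L^2$-sections, and it is nondegenerate), $f$ is uniformly continuous, so for the given $\epsilon$ there is an isometry $V_f^0\colon L^2(E)\to L^2(E')$ with $\mathrm{supp}(V_f^0)\subset\{(x,x')\in X\times X'\colon d(f(x),x')\le\epsilon\}$ (decompose $L^2(E)$ along a Borel partition $X=\bigsqcup_n A_n$ with $\mathrm{diam}\,f(A_n)$ small, embed $\chi_{A_n}L^2(E)$ into an $\epsilon$-ball around $f(x_n)$ inside the ample module $L^2(E')$, with mutually orthogonal images). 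Next, recall that for any strongly continuous unitary representation $(V,\mathcal H)$ of $G$ the Fell unitary $W\colon \mathcal H\otimes L^2(G)\to\mathcal H\otimes L^2(G)$, $(W\xi)(h)=V_h^{-1}\xi(h)$ (viewing the tensor product as $L^2(G;\mathcal H)$), intertwines the diagonal action $V\otimes\lambda$ with $1\otimes\lambda$. Applying this to $\mathcal H=L^2(E)$ and $\mathcal H=L^2(E')$ yields unitaries $W_E,W_{E'}$, and I set
\[
V_f:=W_{E'}^{-1}\circ\bigl(V_f^0\otimes\mathrm{id}_{L^2(G)}\bigr)\circ W_E .
\]
This is an isometry, and it is $G$-equivariant because in the Fell picture the middle map $V_f^0\otimes\mathrm{id}$ commutes with $1\otimes\lambda$.

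The genuine obstacle is the third move: the Fell unitaries have \emph{uncontrolled} propagation, so a priori $G$-equivariance has been bought at the cost of the support estimate. The resolution is to compute how multiplication operators are conjugated. For $\psi\in C_0(X)$ one finds that $W_E(\psi\otimes 1)W_E^{-1}$ is multiplication by the function $(h,x)\mapsto\psi(hx)$, and for $\phi\in C_0(X')$ that $(\phi\otimes1)W_{E'}^{-1}=W_{E'}^{-1}\circ M_{(h,x')\mapsto\phi(hx')}$. Hence at each fibre $h$ the operator $(\phi\otimes 1)V_f(\psi\otimes 1)$ has core $M_{x'\mapsto\phi(hx')}\circ V_f^0\circ M_{x\mapsto\psi(hx)}$, which vanishes unless there exist points with $\psi(hx)\neq 0$, $\phi(hx')\neq 0$ and $d(f(x),x')\le\epsilon$; substituting $u=hx,\ u'=hx'$ and using $G$-equivariance and isometry of the action,
\[
d(f(x),x')=d\bigl(f(h^{-1}u),h^{-1}u'\bigr)=d\bigl(h^{-1}f(u),h^{-1}u'\bigr)=d(f(u),u'),
\]
so the surviving condition is: $\psi(u)\neq 0$, $\phi(u')\neq 0$, $d(f(u),u')\le\epsilon$ for some $u,u'$ — with $h$ no longer appearing. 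Given $(x_\ast,x'_\ast)$ with $d(f(x_\ast),x'_\ast)>\epsilon$, continuity of $f$ and of the metric let me shrink $\mathrm{supp}\,\psi$ and $\mathrm{supp}\,\phi$ around $x_\ast,x'_\ast$ so that $d(f(u),u')>\epsilon$ throughout, forcing $(\phi\otimes1)V_f(\psi\otimes1)=0$; thus $\mathrm{supp}(V_f)\subset\{(x,x')\colon d(f(x),x')\le\epsilon\}$, as required.

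Finally I would note an alternative, more hands-on route that avoids Fell absorption but is messier: decompose $X$ into a $G$-invariant family of precompact Borel pieces with only finitely many $G$-orbits and $\mathrm{diam}\,f$ small, handle one orbit at a time using that the set-stabilizers are compact (properness), and embed each representative piece $G_{\alpha_0}$-equivariantly into a neighbourhood of its $f$-image inside $H(E')$, which is possible because $L^2(G)$ restricted to any compact subgroup contains every irreducible with infinite multiplicity, so $H(E')$ is ``equivariantly ample''; one must still arrange orthogonality of images across pieces. The Fell-absorption argument is cleaner precisely because it confines all the analytic content to the already-known non-equivariant lemma, leaving only the support bookkeeping displayed above, which is where the only real care is needed.
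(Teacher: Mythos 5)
Your proof is correct, but it is organized differently from the paper's. The paper builds $V_f$ by hand: it decomposes the \emph{target} $X'$ (up to measure zero) into open pieces $U_i$ of diameter at most $\epsilon$, observes that $L^2(E')\otimes L^2(G)\cong L^2(E'\times G)=\bigoplus_i L^2(U_i^G)$ where each $U_i^G=\{(gx,g):x\in U_i,\,g\in G\}$ fibres over $G$ with $G$ acting by translation of the fibre label, chooses an isometry $L^2(f^{-1}(U_i^g))\to L^2(U_i^g)$ on a single slice (possible since both are infinite dimensional), and extends it $G$-equivariantly by translation; the support bound is then automatic from $\mathrm{diam}(U_i)\le\epsilon$. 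Your route instead quotes the classical non-equivariant covering-isometry lemma for $V_f^0\colon L^2(E)\to L^2(E')$ and promotes it via Fell absorption, $V_f=W_{E'}^{-1}(V_f^0\otimes\mathrm{id})W_E$. This is really the abstract form of the same mechanism — the paper's reparametrization $(gx,g)\mapsto(x,g)$ of $U_i^G$ \emph{is} the Fell unitary — but your packaging is more modular: all the analysis is delegated to the known non-equivariant lemma, at the price of having to verify that the support estimate survives conjugation by the propagation-uncontrolled unitaries $W_E,W_{E'}$. You do this correctly by computing $W_E(M_\psi\otimes 1)W_E^{-1}$ as the field $h\mapsto M_{\psi(h\cdot)}$ and using equivariance of $f$ and isometry of the action to eliminate $h$ from the resulting condition. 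Two minor remarks: by decomposing the target and pulling back under $f$, the paper never needs $f$ to be uniformly continuous (any Borel coarse map would do), so your uniform-continuity preamble, while correct, is dispensable if you partition $X'$ rather than $X$; and in the step ``supports disjoint from $\mathrm{supp}(V_f^0)$ implies the product vanishes'' you should take $\psi,\phi$ compactly supported, as the standard support lemma requires.
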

\begin{proof}
    For any subset $U\subset X',$ set $U^G$ as 
    \[
    \{(gx,g)\in X'\times G: x\in U, g\in G\}
    \]
    and for any $g\in G,$ define  
    \[U^g:=\{(gx,g)\in X'\times G: x\in U\}.\]
    By omitting a zero measure subset, $X'$ can be decomposed as the disjoint union of open subsets
    \[\bigsqcup_i U_i,\]
    such that the diameter of $U_i$ is less than $\epsilon.$
    With respect to this disjoint union, one can see that 
    \[
    L^2(E')\otimes L^2(G)\cong L^2(E'\times G)=\bigoplus_i L^2(U^G_i).
    \]
    Since both $L^2(f^{-1}(U_i^g))$ and $L^2(U_i^g)$ are  infinite dimensional, one can construct an isometry from $L^2(f^{-1}(U_i^g))$ to $L^2(U_i^g),$ which naturally gives rise to a $G$-equivariant isometry $ v_i :L^2(f^{-1}(U_i^G))\to L^2(U_i^G).$ 
    Now we define $V_f$ as $\bigoplus_i v_i.$ By definition, one can see that the support of $V_f$ lies in the set 
      \[\{ (x,x')\in X\times X': d(f(x),x')\leq \epsilon  \}.\]
      The proof is then completed.
\end{proof} 
By the above lemma, one can see that for each $k\in \mathbb{N}_+$ there exists $V_f(k)$ such that
\[\text{supp}V_{f}(k)\subset \{ (x,x')\in X\times X': d(f(x),x') \leq \frac{1}{k}\}.\]
Now we define a family of isometries $V_f(t): H(E)\oplus H(E)\to H(E')\oplus H(E')$  by
\[
V_f(t)=R(t-k) \begin{pmatrix}
    V_f(k) &0 \\
    0 &  V_f(k+1) 
\end{pmatrix} R^{-1}(t-k), \quad \forall k\leq t\leq k+1,
\]
where 
\[
R(t)=\begin{pmatrix}
    \cos\frac{\pi}{2}t&  \sin\frac{\pi}{2}t \\
     -\sin\frac{\pi}{2}t  &  \cos\frac{\pi}{2}t 
\end{pmatrix}, \quad t\in [0,1].
\]
This family of isometries $V_f(t)$ induces a homomorphism 
\[Ad(V_f (t)):(C_L^*(X, H(E))^G)^+\to (C_L^*(X', H(E'))^G)^+ \otimes M_2(\mathbb{C})\] defined as 
\[Ad(V_f(t)) (u(t)+cI)= V_f(t) (u(t)\oplus 0) V_f^*(t) +cI, \forall u(t)\in C_L^*(X, H(E))^G, c\in \mathbb{C}. \]
Note that $ V_f(t) (u(t)\oplus 0) V_f^*(t)$ is continuous although $V_f(t)$ is not.

\begin{lemma}\label{lem of adv}
   Let $f,$ $X,\ X',\ E, \ E',$ $Ad(V_f(t))$ be as above. Then $Ad(V_f(t))$ induces a homomorphism from $(C_L^*(X, H(E))^G)^+$ to $ (C_L^*(X', H(E'))^G)^+ \otimes M_2(\mathbb{C}),$ which is independent of the choice of $V_f$ at the $K$-theory level. As a result, the definition of $C_L^*(X, H(E))^G$ does not depend on the choice of the $G$-equivariant Riemannian bundle $E.$ 
\end{lemma}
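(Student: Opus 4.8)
The plan is to establish the statement in three steps. First, one checks that $Ad(V_f(t))$ is a well-defined $*$-homomorphism $(C_L^*(X, H(E))^G)^+ \to (C_L^*(X', H(E'))^G)^+ \otimes M_2(\mathbb{C})$; second, that the map it induces on $K$-theory is independent of the family of isometries $V_f$ chosen; third, that specializing $f$ to $\mathrm{id}_X$ and letting $E, E'$ vary produces mutually inverse isomorphisms on $K$-theory, which is exactly the asserted independence of the bundle.

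For the first step, fix $u \in C_L^*(X, H(E))^G$. Since $V_f(t)$ is a $G$-equivariant isometry, conjugating $u(t) \oplus 0$ by it preserves local $G$-continuous compactness. For the propagation, the construction of $V_f(k)$ following Lemma~\ref{lem covering map} gives $\mathrm{supp}(V_f(k)) \subseteq \{d(f(x), x') \le 1/k\}$, while $R(t-k)$ only mixes the two outer copies and hence has zero propagation; combining these, $\mathrm{propagation}\big(V_f(t)(u(t)\oplus 0)V_f(t)^*\big) \le \mathrm{propagation}(u(t)) + 2/k$ on $[k, k+1]$ when $f$ is an isometry (the case needed below), so it tends to $0$ as $t \to \infty$. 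Boundedness is clear, and for norm-continuity of $t \mapsto V_f(t)(u(t)\oplus 0)V_f(t)^*$ one observes that on each open interval $(k, k+1)$ the operator $V_f(t) = R(t-k)\,\mathrm{diag}(V_f(k), V_f(k+1))\,R(t-k)^{-1}$ is norm-continuous (a fixed operator conjugated by the norm-continuous rotation path), so the conjugate is; and across an integer $t = k$ the two one-sided limits of $V_f(t)$ differ only in the block multiplied by the $0$ summand of $u(t) \oplus 0$, so the conjugate still extends continuously — this is the content of the remark preceding the lemma.

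For the second step, let $V_f$ and $V_f'$ be two admissible families. On each interval $[k, k+1]$ form the isometry $W_\theta(t) = R_\theta \cdot \mathrm{diag}\big(V_f(t), V_f'(t)\big)$ from $(H(E)\oplus H(E))^{\oplus 2}$ to $(H(E')\oplus H(E'))^{\oplus 2}$, where $R_\theta$, $\theta \in [0, \tfrac{\pi}{2}]$, is the rotation (analogous to $R$) that interchanges the two outer copies. As $R_\theta$ is $G$-equivariant with zero propagation, the estimates of the first step apply to $W_\theta(t)$ as well, showing that $Ad(W_\theta(t))$ is a $*$-homomorphism into $(C_L^*(X', H(E'))^G)^+ \otimes M_4(\mathbb{C})$ for every $\theta$ and is norm-continuous in $\theta$; it is therefore a homotopy of $*$-homomorphisms. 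At $\theta = 0$ it is $Ad(V_f(t))$ followed by an upper-left corner inclusion $M_2 \hookrightarrow M_4$, and at $\theta = \tfrac{\pi}{2}$ it is $Ad(V_f'(t))$ followed by a complementary corner inclusion, up to a permutation unitary. Since homotopic $*$-homomorphisms induce equal maps on $K$-theory and corner inclusions are $K$-theory isomorphisms, $Ad(V_f(t))$ and $Ad(V_f'(t))$ agree on $K_*$.

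For the third step, take $X = X'$, $f = \mathrm{id}_X$, and $E, E'$ two $G$-equivariant Riemannian bundles. Lemma~\ref{lem covering map} supplies admissible families $V$ (from $E$ to $E'$) and $V'$ (from $E'$ to $E$), supported within $1/k$ of the diagonal on $[k, k+1]$, hence $K$-theory homomorphisms $\phi\colon K_*(C_L^*(X, H(E))^G) \to K_*(C_L^*(X, H(E'))^G)$ and $\psi$ in the opposite direction, after the identification $K_*(A \otimes M_n) \cong K_*(A)$. The composite $\psi \circ \phi$ is induced by $Ad$ of the composed family $V'(t)V(t)$, which is again an admissible family for $\mathrm{id}_X$ with bundle $E$ on both sides (supported within $2/k$ of the diagonal on $[k, k+1]$), so by the second step it induces the same $K$-theory map as $Ad$ of the constant identity family on $H(E)$ — and the latter is, after the corner identification, the identity on $K_*(C_L^*(X, H(E))^G)$. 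Hence $\psi \circ \phi = \mathrm{id}$, and symmetrically $\phi \circ \psi = \mathrm{id}$, so $\phi$ is an isomorphism. The step I expect to require the most care is the second: checking that $G$-equivariance, local $G$-continuous compactness, and the vanishing of propagation at infinity all persist uniformly in $\theta$ along the rotation homotopy (and likewise through the compositions of the third step) — this is where the almost connected Lie group setting could in principle cause trouble, but does not, precisely because $R$, $R_\theta$ and the corner inclusions are $G$-equivariant operators of zero propagation.
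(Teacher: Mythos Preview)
The paper does not supply a proof of this lemma; it is stated as a routine consequence of Yu's localization-algebra machinery, and your three-step outline is precisely the standard argument one would give. Steps~1 and~3 are fine as sketched (the restriction to $f$ an isometry in the propagation estimate is harmless, since the independence-of-bundle claim only needs $f=\mathrm{id}_X$).

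There is, however, a small slip in step~2: the rotation $R_\theta$ must act on the \emph{domain} side, i.e.\ one should take $W_\theta(t)=\mathrm{diag}\bigl(V_f(t),V_f'(t)\bigr)\cdot R_\theta$ rather than $R_\theta\cdot\mathrm{diag}\bigl(V_f(t),V_f'(t)\bigr)$. With your codomain rotation one computes
\[
W_\theta\bigl((u\oplus 0)\oplus 0\bigr)W_\theta^{*}
= R_\theta\bigl(V_f(u\oplus 0)V_f^{*}\oplus 0\bigr)R_\theta^{*},
\]
which at $\theta=\tfrac{\pi}{2}$ is $0\oplus V_f(u\oplus 0)V_f^{*}$ --- still $Ad(V_f)$, merely shifted to the other corner. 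Rotating the domain instead moves the nonzero block of $(u\oplus 0)\oplus 0$ into the slot on which $V_f'$ acts, giving $0\oplus V_f'(u\oplus 0)V_f'^{*}$ at $\theta=\tfrac{\pi}{2}$, which is what you need. With this correction the homotopy argument goes through and the rest of your proposal is sound.
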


Thus, in the following, we denote the $G$-equivariant  localization algebra of $X$ as $C^*_L(X)^G.$

Now let $E\to X$ be a $G$-equivariant Riemannian bundle of rank $n$. We define the $G$-equivariant vertically representable localization algebra $RVC^*_L(E)^G$. The definition below is inspired by \cite[Section 9.4]{yu20book}.

\begin{definition}\label{def rvcl}
Let $\mathcal{E}$ be a $G$-equivariant bundle over $E$. We define the vertically representable localization algebra
\[
RVC^*_L(E, L^2(\mathcal{E})\otimes H)^G
\]
to be the $C^*$-subalgebra of
\[
C_L^*(E,L^2(\mathcal{E})\otimes H)^G
\]
generated by all elements $t\mapsto T(t)$ for which there exist a vertically compact subset $K$ and $t_K\geq 1$ such that
\[
T(t)=\chi_K T(t) \chi_K,\quad \text{for all } t\geq t_K.
\]
For the same reason as explained after Definition \ref{def cl}, the definition of the vertically representable localization algebra is independent of the choice of $\mathcal{E}$ at the level of $K$-theory. Thus, in the following, we abbreviate it as $RVC^*_L(E)^G$.
\end{definition}

We remark that Definition \ref{def rvcl} applies equally well to tubular neighborhoods.

Now we are ready to define the $G$-equivariant $K$-homology and the $G$-equivariant vertically representable $K$-homology.
\begin{definition}\label{def K homo}
    Let $X$ be a complete manifold with a proper, cocompact and isometric $G$-action, where $G$ is an almost connected Lie group. The $G$-equivariant $K$-homology of $X$ is defined as the $K$-theory of $C^*_L(X)^G.$

    Let $E\to X$ be a $G$-equivariant  Riemannian $\mathbb{R}^n$ bundle over $X.$ The vertically representable $G$-equivariant $K$-homology of $E\to X$ is defined as the $K$-theory of $RVC^*_L(E)^G.$

\end{definition}

The following definition is the $G$-equivariant version of \cite[Definition 3.5]{yu1997localization}. 
\begin{definition}\label{def strong lip}
    Let $X$ and $X'$ be two complete manifolds with cocompact, proper and isometric  $G$-actions, where $G$ is an almost connected Lie group. Let $f,\ g$ be two $G$-equivariant Lipschitz maps from $X$ to $X'.$ A continuous homotopy  $F(t,x)\ (t\in [0,1])$ between $f$ and $g$ is said to be strongly Lipschitz if the following conditions are satisfied
    \begin{enumerate}
    \item $F(t,x)$ is a $G$-equivariant coarse map from $X$ to $X'$ for each $t;$
    \item $d(F(t,x),F(t,y))\leq Cd(x,y)$ for all $x,y\in X$ and $t\in [0,1],$ where $C$  is a constant (called the Lipschitz constant of $F$);
    \item $F(0,x)=f(x)$ and $ F(1,x)=g(x)$ for all $x\in X.$
    \end{enumerate}
\end{definition}
Now we recall the notion of $G$-equivariant strongly Lipschitz homotopy equivalence. 
\begin{definition}
     Let $X$ and $X'$ be two complete manifolds with cocompact, proper and isometric  $G$-actions, where $G$ is an almost connected Lie group.   We say that $X$ is $G$-equivariant strongly Lipschitz homotopy equivalent to $X'$ if there exist $G$-equivariant Lipschitz maps  $f: X\to X'$ and $g: X'\to X$ such that $g\circ f$ and $f\circ g$ are $G$-equivariant strongly Lipschitz homotopic to $id_X$ and $id_{X'}$ respectively.
\end{definition}
The proof of the following theorem is a verbatim application of the argument for \cite[Proposition 3.7]{yu1997localization} and application of the adjoint homomorphism defined before Lemma \ref{lem of adv}.
\begin{theorem}\label{thm strong lip inv}
      Let $X$ and $X'$ be two complete manifolds with cocompact, proper and isometric  $G$-actions, where $G$ is an almost connected Lie group. Suppose that $X$ is $G$-equivariant strongly Lipschitz homotopy equivalent to $X'.$ Then  $K_*(C^*_L(X)^G)$ is naturally isomorphic  to $K_*(C^*_L(X')^G).$ 

     The theorem similarly applies to the vertically representable equivariant  $K$-homology.
\end{theorem}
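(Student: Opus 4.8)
The plan is to mimic the non-equivariant localization-algebra argument of Yu \cite[Proposition 3.7]{yu1997localization}, upgrading each ingredient to the almost connected Lie group setting. Concretely, suppose $f\colon X\to X'$ and $g\colon X'\to X$ realize a $G$-equivariant strongly Lipschitz homotopy equivalence. The first step is to attach to each of $f$ and $g$ a family of isometries as in Lemma \ref{lem covering map}, obtaining the homomorphisms $Ad(V_f(t))$ and $Ad(V_g(t))$ on the unitalized localization algebras, and to check (using Lemma \ref{lem of adv}) that on $K$-theory these give well-defined maps $f_*\colon K_*(C^*_L(X)^G)\to K_*(C^*_L(X')^G)$ and $g_*$ in the other direction, independent of all the auxiliary choices. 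Here one uses that $f$ and $g$ are coarse, so that $V_f(t)$ has propagation shrinking with $t$ and hence genuinely maps the localization algebra to itself.

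The second step is to prove functoriality up to strongly Lipschitz homotopy: if $F(t,x)$ is a $G$-equivariant strongly Lipschitz homotopy between two $G$-equivariant Lipschitz maps $f_0,f_1\colon X\to X'$, then $(f_0)_* = (f_1)_*$ on $K$-theory. This is the technical heart of the argument. As in \cite{yu1997localization}, one discretizes the homotopy parameter, chooses for each dyadic value $j/2^m$ an isometry $V_{F(j/2^m,\cdot)}$ covering the corresponding slice, and assembles these — reparametrizing so that as $t\to\infty$ one sweeps through finer and finer subdivisions of $[0,1]$ — into a single family of isometries implementing a homotopy of homomorphisms between $Ad(V_{f_0}(t))$ and $Ad(V_{f_1}(t))$. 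The uniform Lipschitz constant $C$ of $F$ guarantees that all propagations involved are controlled by a fixed multiple of $1/k$, so the resulting path stays inside the localization algebra; the rotation matrices $R(\cdot)$ already used before Lemma \ref{lem of adv} let one interpolate continuously between consecutive isometries. Applying this with $F$ the strongly Lipschitz homotopy from $g\circ f$ to $\mathrm{id}_X$ (and symmetrically on $X'$) gives $g_*\circ f_* = (\mathrm{id}_X)_* = \mathrm{id}$ and $f_*\circ g_* = \mathrm{id}$, so $f_*$ is the desired natural isomorphism.

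The step I expect to be the main obstacle is the $G$-equivariance of the slice isometries $V_{F(t,\cdot)}$ and their compatibility as $t$ varies. In Lemma \ref{lem covering map} the isometry is built by chopping $X'$ into small pieces $U_i$ and using that the preimages are infinite-dimensional; doing this uniformly in the homotopy parameter, equivariantly, and with propagations that degenerate at a uniform rate requires care — one must choose the decompositions and the elementary isometries $v_i$ in a measurable family over $[0,1]$, or else interpolate discretely via the $R(\cdot)$ trick as above. Since the paper has already set up exactly the right equivariant version of Lemma \ref{lem covering map} and of the adjoint homomorphism, and the paper itself asserts the proof is a verbatim application of \cite[Proposition 3.7]{yu1997localization}, I would present this as a short argument citing those two lemmas and the original source, and spell out only the one new point: that $H(E)=L^2(E)\otimes L^2(G)$ and $H(E')$ are abstractly $G$-equivariantly isomorphic as the relevant modules, so that the cited non-equivariant construction carries over once equivariance of the covering isometries is in hand. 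The same reasoning applies verbatim with $C^*_L$ replaced by $RVC^*_L$, observing that strongly Lipschitz homotopies of fiberwise-proper maps preserve vertical compactness of supports, which yields the final sentence of the statement.
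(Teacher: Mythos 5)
Your proposal follows exactly the route the paper takes: the paper's entire proof is the one-line remark that the result is a verbatim application of \cite[Proposition 3.7]{yu1997localization} together with the adjoint homomorphism $Ad(V_f(t))$ built from Lemma \ref{lem covering map}, which is precisely the scheme you lay out (covering isometries, induced $K$-theory maps, strongly Lipschitz homotopy invariance via Yu's discretization/swindle, then composing the two homotopy equivalences). Your added remarks on the equivariance of the slice isometries and on vertical compactness for the $RVC^*_L$ case are reasonable elaborations of points the paper leaves implicit, so the approaches coincide.
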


In \cite{nishikawa2021crossed}, Nishikawa introduced the locally compact group equivariant 
$K$-homology using localization algebras and crossed products. Nishikawa demonstrated that his  equivariant $K$-homology is isomorphic to the equivariant $KK$-homology (see \cite{kasparov1988equivariant}). We acknowledge the necessity of proving that our definition of the equivariant $K$-homology is isomorphic to the equivariant $KK$-homology. A general result for  this  lies outside the scope of our primary objective. However, for the specific case where the manifold is $G$-spin, we verify the isomorphism between our equivariant $K$-homology and equivariant $KK$-homology in Subsection \ref{subsec thom and poincare}. The general case will be addressed in future work.

\subsection{Equivariant $K$-homology class of the Dirac-type operator}\label{sub k homo class}

In this subsection, we recall the definition of the equivariant $K$-homology class of Dirac-type operators. We shall focus only on even-dimensional manifolds. 

Let  $X$ be a complete even-dimensional Riemannian manifold with a cocompact, proper and isometric $G$-action, where $G$ is an almost connected Lie group. Let $D$  be a graded Dirac-type  operator acting on the $L^2$ sections of a $G$-equivariant bundle $E\to X.$   

Let $\psi:(-\infty,\infty)\to [-1,1]$ be an odd smooth function where 
\begin{equation}\label{eq characteristic condition}
     \lim\limits_{x\to \pm \infty} \psi(x)=\pm 1,
\end{equation}
and the Fourier transform    $\hat{\psi}$ has compact support.
Then, keeping in mind that $D$ is $\mathbb Z_2$-graded, we may write 
\[
\psi\left(\frac{D}{t}\right)=\begin{pmatrix}
    0 & U_t\\
    V_t & 0 
\end{pmatrix},\quad  t\in [1,\infty).
\]
Applying inverse Fourier transform we have
\[
\psi\left(\frac{D}{t}\right)=\int_{-\infty}^{\infty} \hat{\psi}(s)e^{2\pi i s \frac{D}{t}} d s,
\]
and it follows that 
\[\lim\limits_{t\to \infty} \left[\text{propagation } \psi\left(\frac{D}{t}\right) \right]=0. \]

Now we define the following element
\[
P^{L^2(E)}_L[D](t)=W_t\begin{pmatrix}
    1 & 0 \\
    0 & 0
\end{pmatrix}W^{-1}_t, \quad t\in [1,\infty)
\]
where 
\[
W_t=\begin{pmatrix}
    1 & U_t\\
   0 & 1
\end{pmatrix}\begin{pmatrix}
    1 & 0\\
    -V_t & 1
\end{pmatrix}\begin{pmatrix}
    1 & U_t\\
    0 & 1
\end{pmatrix}\begin{pmatrix}
    0 & -1\\
    1 & 0
\end{pmatrix}, \quad t\in [1,\infty)
\]
Through direct computation, one obtains for $t\in [1,\infty)$:
\begin{equation}\label{eq pre k of d}
P^{L^2(E)}_L[D](t)=\begin{pmatrix}
    1-(1-U_{t}V_{t} )^2 & (2-U_{t}V_{t} )U_{t} (1-V_{t}U_{t} )\\
    V_{t} (1-U_{t}V_{t} ) & (1-V_{t}U_{t} )^2
\end{pmatrix}.
\end{equation}
At this stage, for every $t\in [1,\infty),$ $P^{L^2(E)}_L[D](t)$ lies in $M_2(C^*(X, L^2(E))^G)^+.$ To generate an element in $M_2(C^*(X)^G)^+,$ we embed $P^{L^2(E)}_L[D](t)$ into $M_2(B(L^2(E)\otimes L^2(G))=M_2(B(H(E)))$ by the map defined as follows. 
Let $\chi\in C^{\infty} (X) $ be a cutoff function whose support has compact intersections with all $G$-orbits and satisfies  
\[
\int_G \chi(gm)^2 d g=1, \quad  \forall m\in X.
\]
Let $j: L^2(E)\to L^2(E)\otimes L^2(G)$ be a map defined by
\[
j(s)(x,g)=\chi(g^{-1}x)s(x), s\in L^2(E), x\in X, g\in G,
\]
let $j^*$ be its adjoint.
We now define 
\begin{equation}\label{eq k homo repre of d}
   P_L[D](t)= j P_L^{L^2(E)} j^* \in M_2(B(L^2(E)\otimes L^2(G))), \quad\forall t\in [1,\infty).
\end{equation}

Thus 
\[
[t\mapsto P_L[D](t)]-\left[ \begin{pmatrix}
    1 & 0 \\
    0 & 0
\end{pmatrix}\right]
\]
defines a class in $K_0(C^*_L(X)^G).$  This class is referred to as the $G$-equivariant $K$-homology class of $D,$ denoted by $[D].$

Let $\pi :F\to  X$ be a $G$-equivariant Riemannian vector bundle. Let $\pi': E\to X$ be a $G$-equivariant Riemannian vector bundle equipped with a left $\text{Cliff}(F)$ representation. Specifically, $E=E^+\oplus  E^-,$ for any $v_x\neq 0 \in F_x$ at $ x\in X,$ the action of $v$ on $E_x$ can be expressed as 
\[
\begin{pmatrix}
   0 & v_x^+\\
   v_x^- & 0  
\end{pmatrix},
\]
according to the decomposition $E_x=E^+_x\oplus E_x^-,$ where $v_x^+$ and $v_x^-$ are both isomorphisms. Following Definition 2.8.5, Construction 2.9.15, and Theorem 2.9.16 of \cite{yu20book}, we recall a construction to define a $K$-theory class in $K_G^0(F)$ for the  Clifford  action of $F$ on $\pi^* E.$  Let $h$ be an odd non-decreasing real function on $\mathbb{R}$ such that 
\[
h(t)=1,\quad  \forall t>1.
\]
For $v\in \Gamma(F)$ with $\|v_x\|\geq 1, \forall x\in X,$ with respect to the grading, we have 
\[
h(c(v))=\begin{pmatrix}
    0 & \alpha \\
    \beta & 0
\end{pmatrix}.
\]
We define 
\begin{equation}\label{eq def of pv}
    P[v]=Z \begin{pmatrix}
    1 & 0\\
    0 & 0
\end{pmatrix}Z^{-1},
\end{equation}
where 
\[
Z=\begin{pmatrix}
    1 & \alpha \\
    0 & 1
\end{pmatrix}\begin{pmatrix}
    1 & 0 \\
    -\beta & 1
\end{pmatrix}\begin{pmatrix}
    1 & \alpha \\
    0 & 1
\end{pmatrix}\begin{pmatrix}
    0 & -1 \\
    1 & 0
\end{pmatrix}.
\]
It follows that  
\[
p[v]=\begin{pmatrix}
    1-(1-\alpha\beta )^2 & (2-\alpha\beta )\alpha (1-\beta\alpha )\\
    \beta (1-\alpha\beta ) & (1-\beta\alpha )^2
\end{pmatrix}.
\]
Then the formal difference
\[
[p[v]]-\left[ \begin{pmatrix}
    1 & 0 \\
    0 & 0
\end{pmatrix}\right]
\]
defines a class in $K^0_G(F),$ which we denote by $[v].$  In fact, it is well known that this class corresponds to the triple
\[[\pi^* E^+, \pi^*E^-, c(v) ].\]

\subsection{The pairing}\label{sub cap product}

In this subsection, we revisit the construction of the pairing between equivariant $K$-homology and $K$-theory introduced by Willett and Yu  \cite{yu20book}.  

Let  $X$ be an even-dimensional complete Riemannian manifold with a cocompact, proper and isometric $G$-action, where $G$ is an almost connected Lie group.  We define the pairing between $K_0(C^*_L(X)^G)$ and $K_G^0(X)$ as stated in the following theorem.  Note that every class of $K_G^0(X)$ can be represented by projections with $G$-compact support and, consequently, by bounded and uniformly continuous elements in $M_{\infty}(C(X)).$ 

\begin{theorem}\label{thm the cap product}
For any $[t\mapsto P(t)]\in K_0(C^*_L(X)^G)$ and $[p]\in K_G^0(X),$
where $t\to P(t)$ and $p$ are both idempotents, the path 
\[
t\mapsto P(t+T)p,\quad  t\in [1,\infty)
\]
is an almost idempotent when $T$ is a sufficiently large positive number, thus defines a class 
\[
[t\mapsto P(t+T)p]\in K_0(C^*_L(X)^G).
\]
Then one can define the pairing
\[
K_0(C^*_L(X)^G)\otimes K_G^0(X)\to K_0(C^*_L(X)^G)
\]
by 
\[
[t\mapsto P(t)]\otimes [p]=[t\mapsto P(t+T)p].
\]
It is straightforward to see that the definition is well defined. 
\end{theorem}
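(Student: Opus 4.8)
The plan is to verify well-definedness of the pairing in Theorem~\ref{thm the cap product} by checking three things: that $t\mapsto P(t+T)p$ is genuinely an almost idempotent in $C^*_L(X)^G$ (so that the functional calculus producing a projection, hence a $K_0$-class, applies), that the resulting class is independent of the auxiliary constant $T$, and that it depends only on the $K$-theory classes $[t\mapsto P(t)]$ and $[p]$, not on the chosen idempotent representatives.

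\medskip

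First I would establish the almost-idempotent claim. The key point is a propagation-versus-support estimate: since $t\mapsto P(t)\in C^*_L(X)^G$, the propagation of $P(t)$ tends to $0$ as $t\to\infty$, while $p\in M_\infty(C(X))$ is a bounded, uniformly continuous idempotent with $G$-compact support. Because $P(t+T)$ and $p$ can be approximated in norm, uniformly in $t$, by finite-propagation operators, the commutator $[P(t+T),p]$ has norm controlled by the modulus of continuity of $p$ over balls of radius $\mathrm{propagation}(P(t+T))$; as $t\to\infty$ this modulus goes to $0$. Hence $\|(P(t+T)p)^2 - P(t+T)p\| = \|P(t+T)[P(t+T),p]p\| + \|(P(t+T)^2 - P(t+T))p\|\to 0$ as $t\to\infty$, using that $P(t)$ is itself asymptotically idempotent (it is a genuine idempotent for each $t$, in fact, from \eqref{eq pre k of d}). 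Choosing $T$ large enough makes $\|(P(t+T)p)^2-(P(t+T)p)\|$ small enough (say $<1/4$) for all $t\in[1,\infty)$ to apply holomorphic functional calculus fiberwise and uniformly, producing an idempotent path in $C^*_L(X)^G$ and thus a class in $K_0(C^*_L(X)^G)$. One must also check that $t\mapsto P(t+T)p$ lands in $C^*_L(X)^G$ in the first place: it is a product of a locally $G$-continuous compact finite-propagation path with a multiplier of bounded propagation (namely $0$), so the product is again locally $G$-continuous compact with propagation tending to $0$, and uniform norm-continuity is inherited from both factors.

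\medskip

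Next I would treat independence of $T$. Given two admissible constants $T_0 \le T_1$, the straight-line homotopy $s\mapsto \big(t\mapsto P(t + (1-s)T_0 + sT_1)p\big)$, $s\in[0,1]$, is a norm-continuous path of almost idempotents in $C^*_L(X)^G$ (reparametrization in $t$ is norm-continuous because $P$ is uniformly norm-continuous), so the associated $K_0$-classes agree by homotopy invariance of $K$-theory. For independence of the representative idempotents: if $P(t)$ and $P'(t)$ represent the same class in $K_0(C^*_L(X)^G)$, they are connected, after stabilization, by a path of idempotents (or conjugated by a path of invertibles) in matrices over the unitization; right-multiplying the whole homotopy by $p$ and reparametrizing by $T$ gives a homotopy of almost idempotents, so the pairing outputs agree. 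The argument on the $p$-side is symmetric, using that a homotopy of idempotents $p_s$ in $M_\infty(C(X))$ with uniformly $G$-compact support yields $t\mapsto P(t+T)p_s$ as a homotopy once $T$ is taken uniformly large over $s\in[0,1]$ (possible since the modulus-of-continuity bound is uniform over a compact family). Bilinearity over the block-sum operation on both factors is immediate from the block-diagonal form.

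\medskip

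I expect the main obstacle to be the \emph{uniformity} in the almost-idempotent estimate: one needs a single $T$ that works simultaneously for all $t\in[1,\infty)$, and when proving the homotopy statements, a single $T$ working for an entire one-parameter family of representatives. This requires that the relevant approximations of $P(t)$ by finite-propagation operators, and the control of $\|[P(t),p]\|$ by the propagation of $P(t)$ together with the modulus of continuity of $p$, be genuinely uniform in $t$ — which is exactly what the definition of $C^*_L(X)^G$ (uniform norm-continuity plus propagation $\to 0$) and the $G$-compact support of $p$ are designed to guarantee. Once this uniform estimate is in hand, everything else is a routine application of homotopy invariance of operator $K$-theory, and the verification of "well-definedness" reduces to bookkeeping.
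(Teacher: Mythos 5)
Your proposal is correct and follows essentially the same route as the paper: both reduce $(P(t+T)p)^2-P(t+T)p$ to the commutator term $-P(t+T)[P(t+T),p]p$ using that $P$ and $p$ are genuine idempotents, and then show $\|[P(t+T),p]\|\to 0$ via propagation tending to zero against the uniform continuity of $p$ (the paper simply cites Lemma 6.1.1 of Willett--Yu for this estimate, which you prove directly). Your additional checks of independence of $T$ and of the representatives are exactly the content the paper declares ``straightforward.''
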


\begin{proof}
  We need to show that there exist  $\epsilon<\frac{1}{10}$ and $T>0$ such that for 
  \[\|(P(t+T)p)^2-P(t+T)p\|<\epsilon,\quad  \forall t>0.\]
Since   
\[
(P(t+T)p)^2-P(t+T)p=(P(t+T)p)^2-P^2(t+T)p^2=-P(t+T) [P(t+T),p]p,
\]
it suffices to show that 
\[
\lim\limits_{t\to +\infty} \|[P(t+T),p]\|=0,
\]
which is established in \cite[Lemma 6.1.1]{yu20book}. 

Similarly one can define the pairing between $G$-equivariant  vertically representable $K$-homology and $G$-equivariant $K$-theory.
\end{proof}

In the following, we will also consider the pairing:
\[
K_0(C^*_L(X)^G)\otimes K^0(X) \to K_0(C^*_L(X))
\]
whose definition is entirely analogous to the one given in Theorem \ref{thm the cap product}.

\subsection{Poincar\'e duality}\label{subsec thom and poincare}

In this subsection we establish the Poincar\'e duality between equivariant $K$-homology and equivariant $K$-theory for  $G$-$\text{spin}$  manifolds, where an almost connected Lie group $G$ acts on the manifold cocompactly, properly and isometrically.

Let $G$ be an almost connected Lie group acting properly, cocompactly and isometrically on $X,$ which is a $G$-$\text{spin}$ manifold. Denote by $S(T^*X)$  the spin bundle associated to $T^*X,$ and let $\D: L^2(S(T^*X))\to L^2(S(T^*X))$ be the densely defined Dirac operator. 

\begin{definition}\label{def times D}
Let $ P[{\D}]: [1,+\infty)\mapsto (C^*_L(X)^G)^+$ be a representative of 
\[[\D]\in K_0(C_L^*(X)^G).\]
Define the homomorphism
\[
\otimes [\D] : K^0_G(X)\to K_0(C_L^*(X)^G),
\]
by 
\[
[p]\otimes [\D] = [t\mapsto P_L[\D](t+T) p]=[t\mapsto pP_L[\D](t+T)], \quad  \forall [p]\in K^0_G(X)
\]
where $T>0$ is sufficiently large, and $P_L[\D]$ is as defined in Subsection \ref{sub k homo class}. 
\end{definition}
It is straightforward to verify that this definition is independent of the choice of the representative $P_L[\D]$ and $T.$ 

As was shown in \cite{kasparov1988equivariant}, there is a constant  $r>0,$ such that for any  $x,y\in X$ with the distance $\rho(x,y)<r,$  there is a unique geodesic segment joining $x$ and $y.$ For any $x\in X,$ let $U_x$ denote the ball of radius $r$ centered at $x.$ Define a differential 1-form on  $U_x$ by 
\[
\theta_x(y):= \frac{\rho(x,y)}{r}(d_y\rho)(x,y), \quad \forall y \in U_x
\]
where  $\rho(x,y)$ stands for the distance function. Due to the Clifford representation,  $\theta_x$ acts as a bounded operator on $L^2(S(T^*X)).$ Moreover,  $1-\theta_x^2$ is a continuous function in $C(X).$ Consider $\theta_X$ as the family of $\{\theta_x\}_{x\in X}.$ 

\begin{definition}
Define the homomorphism
\[
\otimes [\theta_X] : K_0(C_L^*(X)^G)\to K_G^0(X)
\]
by 
\[
[P]\otimes [\theta_X]\mapsto [ P(T)p [\theta_X]  ], \forall [P]\in  K_0(C_L^*(X)^G),
\]
where $T$ is a sufficiently large positive number, and $p [\theta_X] $ is a representative of the class $[\theta_X]\in K_G^0(X)$ as defined in Subsection \ref{sub k homo class}.  
Note that for any $x\in X,$ $P(T)p [\theta_X]$ is an almost projection in $B(L^2(S(T^*X))\otimes L^2(G)),$ thus $P(T)p[\theta_X]$ defines a family of almost projections with coefficients depending on $x\in X,$ which represents a class in $K_G^0(X).$ It is straightforward to verify that this definition is independent of the choice of $p[\theta_X]$ and $T.$ 
\end{definition}

The following lemma is inspired by \cite{kasparov1988equivariant,yu20book}.

\begin{proposition}\label{prop dtheta for mfld}
The homomorphism $\otimes [\theta_X]$ is the right inverse of the homomorphism $\otimes [\D],$ i.e., for all $[p]\in K_G^0(X),$ 
\[
[p]  \otimes [\D] \otimes [\theta_X]= [p]\otimes ([\D] \otimes \theta_X)=[p]\otimes [1_X],
\]
where $[1_X]$ stands for the class represented by the rank $1$ trivial projection in $M_{\infty}(C(X)).$
\end{proposition}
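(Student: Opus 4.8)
The plan is to reduce the identity $[p]\otimes[\D]\otimes[\theta_X] = [p]\otimes[1_X]$ to a statement about the symbol of the composition, following the Kasparov-style argument that the Dirac class and the Bott-type class $[\theta_X]$ are Poincaré dual. First I would unwind both homomorphisms concretely. By Definition~\ref{def times D} and the definition of $\otimes[\theta_X]$, the left-hand side is represented by the family of idempotents $x \mapsto P_L[\D](T')\, p\, p[\theta_X](x)$ for $T'$ large, living in $M_\infty(C(X))$ (with coefficients in $B(L^2(S(T^*X))\otimes L^2(G))$). The key observation is that $P_L[\D](t)$ is built, via Equation~\eqref{eq pre k of d}, from $f(\D/t)$, whose ``high-$t$'' behaviour is governed by the principal symbol of $\D$, i.e. by Clifford multiplication; and $p[\theta_X](x)$ is built, via Equation~\eqref{eq def of pv}, from $h(\hat c(\theta_x))$, i.e. from Clifford multiplication by the covector field $\theta_x$ that vanishes exactly at $x$ and is radial near $x$. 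So both factors are, up to lower-order/compact corrections, functions of Clifford multiplication, and their product should be computed pointwise by a Clifford-algebra identity.

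The key steps, in order, would be: (1) Fix a point $x\in X$ and work in the ball $U_x$ of radius $r$; here $\theta_x(y) = \tfrac{\rho(x,y)}{r}(d_y\rho)(x,y)$ is a genuine covector field on $U_x$ vanishing only at $x$ and of unit length near $\partial U_x$. (2) Show that, after pairing with $\otimes[\theta_X]$, the class $[p]\otimes[\D]$ becomes, at the point $x$, the class represented by the Clifford action on $S(T^*X)$ of the combined covector field obtained from the symbol of $\D$ and from $\theta_x$ — i.e. recognize $[p]\otimes[\D]\otimes[\theta_X]$ at $x$ as the class $[\theta_x] \in K^0(U_x)$ of Subsection~\ref{sub k homo class}, tensored with $[p]$. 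This is exactly where the large-$T$ limit enters: one uses $\lim_{t\to\infty}\operatorname{propagation} f(\D/t) = 0$ and the asymptotic commutation $\lim_{t}\|[P_L[\D](t+T), p[\theta_X]]\|$-type estimates from \cite[Lemma 6.1.1]{yu20book} to replace $P_L[\D](T')$ by its symbol contribution near $x$. (3) Invoke the local index/Bott computation: on $U_x \cong$ a ball in $\mathbb{R}^{\dim X}$, the Clifford action of a radial covector field vanishing at the origin represents the Bott generator, and the composition with the spinor Dirac symbol collapses it to the rank-one trivial class $[1_{U_x}]$. This is the Clifford-algebra content of ``$[\D]\otimes[\theta_X] = [1_X]$'' — essentially the statement that $\hat c(v)^2 = -|v|^2$ forces the relevant idempotent to be homotopic to $\begin{pmatrix}1&0\\0&0\end{pmatrix}$ outside a small neighbourhood, and the local Bott class integrates to $1$. (4) Patch the local identities together using a $G$-invariant partition of unity / the cutoff function $\chi$ and the fact that the construction is $G$-equivariant and has small propagation, so that the global class on $X$ is the trivial rank-one projection times $[p]$.

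The main obstacle I expect is step (3) together with its globalization in step (4): making precise that the pointwise Clifford-algebra identity — "radial covector field Clifford-acting on the spin bundle gives the Bott element, which cancels against the Dirac symbol to give $1$" — actually holds at the level of $K_0(C^*_L(X)^G)$ and not just formally on symbols. One must control the passage from the bounded transform $f(\D/t)$ to honest Clifford multiplication uniformly in $x$, handle the non-compactness of $X$ via the properness and cocompactness of the action (this is where the module $L^2(S(T^*X))\otimes L^2(G)$ and the cutoff $\chi$ with $\int_G \chi(gm)^2\,dg = 1$ do the work), and verify that all homotopies can be chosen $G$-equivariantly and finite-propagation. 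A secondary, more bookkeeping-type difficulty is tracking the normalization: one must check that the orientation/spin conventions are set up so that the Bott class cancels the Dirac symbol to give exactly the rank-one projection $[1_X]$ rather than its negative or a sum; this is the kind of sign that, if mishandled, would give $[p]\otimes(-[1_X])$ instead. Once these are in place, the statement follows; the genuinely new input compared to the closed-manifold case of \cite{kasparov1988equivariant} is only that every estimate must be made $G$-equivariantly with finite propagation, which is guaranteed by the localization-algebra formalism set up in Subsection~\ref{subsec localization algebra}.
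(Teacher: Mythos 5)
Your proposal has the right conceptual ingredients — the symbol of $\D$ and the radial covector field $\theta_x$ interact through Clifford algebra, a Bott-type collapse happens near each $x$, and the whole construction must be kept $G$-equivariant with small propagation — but the route you take is genuinely different from the paper's, and it stops exactly at the step you yourself flag as the ``main obstacle.''

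The paper does not argue pointwise and then patch. After reducing the claim to showing $[P_L[\D](T)p[\theta_X]]=[1_X]\in K_G^0(X)$, it invokes two results from \cite{yu20book}. First, Lemma~9.33 rewrites the cap product as the class of a \emph{single} explicit operator family: $[P_L[\D](T)p[\theta_X]]=[P_L[\D\hat\otimes 1+1\hat\otimes\theta_X](T)]$, where $\D\hat\otimes 1+1\hat\otimes\theta_X$ acts on $L^2(\mathrm{Cliff}(T^*X))$ as the de Rham operator plus right Clifford multiplication by $\theta_x$ twisted by the grading — that is, a Witten deformation of the de Rham operator by the continuous family of radial covector fields parameterized by $x\in X$. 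Second, Theorem~9.3.5 is quoted to say that this family class equals $[1_X]$. This single global Witten-localization statement does all the work your steps (2)--(4) are reaching for, in one move and without any patching.

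The gap in your proposal is step (4). $K_G^0(X)$ is not a sheaf: knowing, for each fixed $x$, that the almost-projection $P_L[\D](T)\,p\,p[\theta_X](x)$ is individually homotopic to a rank-one trivial projection does not, by itself, produce a homotopy of the \emph{family} that is continuous in $x$. A $G$-invariant partition of unity lets you localize functions, not $K$-theory classes, and there is no Mayer--Vietoris-style gluing implicit in the cap-product construction that would promote a pointwise identification to an identification of $K_G^0(X)$-classes. What one actually needs is a homotopy that varies continuously with $x$, and this is precisely what the global Witten deformation (the harmonic-oscillator localization of $\D\hat\otimes1+1\hat\otimes\theta_x$, uniform in $x$) produces. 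If you want to push your local route through, the missing ingredient is precisely this control of $x$-dependence of the homotopies; otherwise, the efficient path is to recognize the product as the class of one perturbed de Rham operator via \cite[Lemma 9.33]{yu20book} and cite the global localization theorem \cite[Theorem 9.3.5]{yu20book}, which is what the paper does.
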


    \begin{proof}
Let 
\[ P_L[\D]: [1,\infty)\mapsto (C^*(X)^G)^+\] be a representative of 
\[ [\D] \in K_0(C_L^*(X)^G).\]
It suffices to prove that 
\[[P_L[\D](T)p[\theta_X]]=[1_X]\in K_G^0(X).\]
However, as shown in  \cite[Lemma 9.33]{yu20book},  
\[[P_L[\D](T)p[\theta_X]]=
[P_L[\D\hat{\otimes} 1+1\hat{\otimes}\theta_X](T)], \]
where 
\[\D\hat{\otimes} 1+1\hat{\otimes}\theta_X\]
is a densely defined operator on 
\[L^2(S(T^*X)\hat\otimes S(T^*X))=L^2(\text{Cliff}(T^*X)).\]
Note that $\D\hat{\otimes} 1$ acts on $L^2(\text{Cliff}(T^*X))$ as the de Rham operator, and the action of $1\hat{\otimes}\theta_X$  on $L^2(\text{Cliff}(T^*X))$ is induced by the right Clifford multiplication twisted by the grading
\[w\to (-1)^{\partial w}w \theta_X, \quad \forall \text{ homogeneous } w\in \text{Cliff}(T^*(X)).\]
However, \cite[Theorem 9.3.5]{yu20book} shows that 
\[[P_L[\D\hat{\otimes} 1+1\hat{\otimes}\theta_X](T)]=[1_X]\in K_G^0(X). \]
\end{proof}

The following proposition can be viewed as the dual of Proposition \ref{prop dtheta for mfld}.

\begin{proposition}\label{prop thetad for mfld}
The homomorphism $\otimes[\D]$ is the right inverse of the homomorphism $\otimes [\theta_X],$ i.e.
\[
[P]  \otimes [\theta_X]  \otimes [\D]= [P], \quad \forall [P]\in K_0(C^*_L(X)^G).
\]
\end{proposition}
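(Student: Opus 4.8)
The plan is to mirror the computation in the proof of Proposition~\ref{prop dtheta for mfld}, now carrying the extra factor $P$ through it. Fix a representative path $P:[1,\infty)\to(C^*_L(X)^G)^+$ of $[P]$, with the propagation of $P(t)$ tending to $0$. Unwinding the two pairing maps of this subsection, $[P]\otimes[\theta_X]$ is represented by the family of almost projections $P(T_1)\,p[\theta_X]$ over $X$, and then $[P]\otimes[\theta_X]\otimes[\D]$ is represented by the path
\[
t\ \longmapsto\ P_L[\D](t+T_2)\,\bigl(P(T_1)\,p[\theta_X]\bigr),\qquad t\in[1,\infty),
\]
for $T_1,T_2$ sufficiently large; the task is to show that this path is $K$-equivalent in $K_0(C^*_L(X)^G)$ to $t\mapsto P(t)$.

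The first step is to absorb the Clifford data. As in Proposition~\ref{prop dtheta for mfld}, \cite[Lemma 9.33]{yu20book} rewrites the product $P_L[\D]\cdot p[\theta_X]$, after passing to $L^2(\text{Cliff}(T^*X))=L^2(S(T^*X)\hat\otimes S(T^*X))$, as $P_L[\D\hat\otimes 1+1\hat\otimes\theta_X]$, where $\D\hat\otimes 1$ acts as the de Rham operator on the first factor and $1\hat\otimes\theta_X$ is the grading-twisted right Clifford multiplication. Since $P(T_1)$ acts only through the first factor and has small propagation, it commutes with the right Clifford action up to errors vanishing as $t\to\infty$, so the path above is $K$-equivalent to $t\mapsto P(T_1)\cdot P_L[\D\hat\otimes 1+1\hat\otimes\theta_X](t+T_2)$.

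The second step is the Witten-deformation computation of \cite[Theorem 9.3.5]{yu20book}: up to a $G$-equivariant strongly Lipschitz homotopy and rescaling (Theorem~\ref{thm strong lip inv}), the localized projection of $\D\hat\otimes 1+1\hat\otimes\theta_X$ converges to the fiberwise projection onto the $\theta_X$-vacuum, which is canonically identified as a $G$-equivariant $X$-module with $L^2(S(T^*X))\otimes L^2(G)$; one then carries $P$ along this deformation and lets the frozen parameter $T_1$ run out to infinity in step with the deformation parameter, thereby recovering $t\mapsto P(t)$. Hence $[P]\otimes[\theta_X]\otimes[\D]=[P]$. Conceptually, Propositions~\ref{prop dtheta for mfld} and \ref{prop thetad for mfld} are the two faces of a single symmetry of the situation on $X\times X$, exchanged by the flip together with a homotopy pushing supports off the diagonal; the computation above is the localization-algebra shadow of that symmetry.

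The step I expect to be the main obstacle is the coupling in the second step: producing a homotopy that stays inside $C^*_L(X)^G$ — so that finite propagation, local $G$-continuous compactness, $G$-equivariance and the uniform almost-idempotency estimates all persist — and that simultaneously performs the vacuum-projection deformation and transfers the localization parameter from $P_L[\D]$ onto $P$. This is exactly where the cutting-and-pasting arguments available for discrete $G$ fail (cf.\ the remark after Theorem~\ref{thm pd for bundle in intro}), so one must work globally on $X$, using the admissibility of the modules $H(E)$ and the $G$-continuity of the connecting isometries from Lemma~\ref{lem covering map}. As a sanity check and possible fallback: Proposition~\ref{prop dtheta for mfld} already shows $\otimes[\D]$ split injective and $\otimes[\theta_X]$ split surjective, so it would suffice to prove $\otimes[\D]$ surjective; but that is itself an instance of Poincar\'e duality, so I expect the direct homotopy argument above to be the route actually taken.
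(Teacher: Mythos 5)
Your proposal correctly reduces everything to the Willett--Yu computation for $\D\hat\otimes 1+1\hat\otimes\theta_X$, but the route you actually propose has an unresolved hole exactly where you say you expect one. Your second step requires a homotopy inside $C^*_L(X)^G$ that simultaneously (i) runs the vacuum-projection deformation of $P_L[\D\hat\otimes 1+1\hat\otimes\theta_X]$ and (ii) un-freezes the parameter of $P$ so as to recover the path $t\mapsto P(t)$; you flag this coupling as ``the main obstacle'' and give no argument for it, so as written the proof is incomplete. (A smaller inaccuracy: for fixed $T_1$ the commutator of $P(T_1)$ with the right Clifford multiplication does not vanish as $t\to\infty$; it is controlled by the propagation of $P(T_1)$ and only vanishes as $T_1\to\infty$, which is another reason the two parameters must be decoupled carefully.)

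The paper sidesteps this entirely, and its mechanism is precisely the ``flip'' you mention only as a conceptual aside. Writing $P(t+T)\,p[\theta_X]\,P_L[\D](t+T)$ as an integral operator with kernel $K_t(z,y)\,h(x,y)\,K_{\D,t}(x,w)$ on $L^2(X)\otimes L^2(X)$, the swap of the variables $x$ and $y$ gives a unitary equivalence with $P(t+T)\bigl(p[\theta_X]P_L[\D](t+T)\bigr)$, i.e.\ it re-associates the triple product so that the inner factor is the $K^0_G(X)$-class $[p[\theta_X]P_L[\D](t+T)]$. That class equals $[1_X]$ by Proposition~\ref{prop dtheta for mfld} (via \cite[Lemma 9.33, Theorem 9.3.5]{yu20book}), and $[P]\otimes[1_X]=[P]$ finishes the proof. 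So $P$ is never carried through the Witten deformation at all; the deformation is performed purely on the $K$-theory coefficient. If you want to salvage your direct approach, you would have to prove the associativity you are implicitly using, and the kernel-level swap is the cleanest way to do that --- at which point you have reproduced the paper's argument.
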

\begin{proof}
In fact, we have 
\[
[P]  \otimes [\theta_X] \otimes [\D] 
= [P(T) p [\theta_X] ]\otimes [\D] 
=[P(T) p [\theta_X]P_L[\D](t+T)],
\]
where $P_L[\D](t+T)$ (resp. $ p [\theta_X] $) is a representative of $[\D]\in K_0(C^*_L(X^G)$ (resp. $[\theta_X]\in K_G^0(X)$) as defined in Subsection \ref{sub k homo class}.

We also have the following equality
\[ [P(T)  p [\theta_X]  P_L[\D](t+T)]=[P(t+T) p[\theta_X]P_L[\D](t+T)].\]

Recall that $p [\theta_X]$ is a family of almost projections corresponding to $x\in X.$ For each $x\in X,$ we denote the almost projection in  $p [\theta_X]$ corresponding to $x$  by $h(x,y).$
Let $K_t(x,y)$ be the kernel of the integral operator representing $P(t+T),$ and $K_{\D,t}(x,y)$ be the kernel of the integral operator representing $P_L[\D](t+T).$ Then 
\[
P(t+T) p[\theta_X]  P_L[\D](t+T)
\]
is a path of locally compact almost projections with finite propagation on $L^2(X)\otimes L^2(X)$ defined by kernel 
\[
K_t(z,y) h(x,y) K_{\D,t} (x,w).
\]
Thus, the swap of variables $x$ and $y$ induces a unitary equivalence between 
\[P(t+T) p [\theta_X] P_L[\D](t+T)\]
and 
\[P(t+T)(  p [\theta_X]  P_L[\D](t+T) ).\]
This implies that 
\begin{eqnarray*}
&&[P(t+T) p [\theta_X]   P_L[\D](t+T)]\\
&=&[P(t+T) (p [\theta_X]   P_L[\D](t+T))]\\
&=&[P(t+T)]\otimes [ p [\theta_X]  P_L[\D](t+T)]\\
&=&[P(t+T)]\otimes [1_X]\\
&=& [P].
\end{eqnarray*}
\end{proof}

Combining Propositions \ref{prop dtheta for mfld} and \ref{prop thetad  for mfld}, one can see that 
\begin{theorem}\label{thm pd for mfld}
   In the case where $G$ is an almost connected Lie group and $X$ is a $G$-spin manifold with an isometric, proper and cocompact $G$-action, the homomorphism
    \[\otimes [\D] : K^0_G(X)\to K_0(C_L^*(X)^G)\]
    is an isomorphism with inverse 
    \[
    \otimes [\theta_X] : K_0(C_L^*(X)^G)\to K_G^0(X).
    \]
\end{theorem}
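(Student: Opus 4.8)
The plan is to assemble Theorem \ref{thm pd for mfld} directly from the two one-sided inverse statements already proved, namely Propositions \ref{prop dtheta for mfld} and \ref{prop thetad for mfld}. The first gives $[p] \otimes [\D] \otimes [\theta_X] = [p] \otimes [1_X]$ for all $[p] \in K^0_G(X)$, and the second gives $[P] \otimes [\theta_X] \otimes [\D] = [P]$ for all $[P] \in K_0(C^*_L(X)^G)$. So the only genuine remaining task is to observe that pairing with the trivial rank-one projection $[1_X] \in K^0_G(X)$ acts as the identity on $K^0_G(X)$, i.e.\ $[p] \otimes [1_X] = [p]$; together with the associativity of the Kasparov-type products involved (the associativity of the pairings constructed in Subsection \ref{sub cap product}), this upgrades the two propositions to the statement that $\otimes [\D]$ and $\otimes [\theta_X]$ are mutually inverse group homomorphisms.

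Concretely, I would proceed as follows. First I would note that $\otimes [\D]$ and $\otimes [\theta_X]$ are group homomorphisms by construction (Definitions \ref{def times D} and the definition of $\otimes [\theta_X]$). Next I would record the elementary fact that $\otimes [1_X] = \mathrm{id}$ on $K^0_G(X)$: the class $[1_X]$ is represented by the constant rank-one projection, and pairing a $K$-theory idempotent $p$ with it over $X$ returns $p$ up to the usual homotopy, so $[p] \otimes [1_X] = [p]$. Then, reading Proposition \ref{prop dtheta for mfld} as the statement $(\otimes [\theta_X]) \circ (\otimes [\D]) = \otimes [1_X] = \mathrm{id}_{K^0_G(X)}$, and Proposition \ref{prop thetad for mfld} as $(\otimes [\D]) \circ (\otimes [\theta_X]) = \mathrm{id}_{K_0(C^*_L(X)^G)}$, I conclude that $\otimes [\D]$ is a bijection with two-sided inverse $\otimes [\theta_X]$. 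A short remark should justify that the composite of the two pairings is what appears in the propositions — this is exactly the associativity $([p] \otimes [\D]) \otimes [\theta_X] = [p] \otimes ([\D] \otimes [\theta_X])$, which is built into the product and is stated inside Proposition \ref{prop dtheta for mfld} itself.

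I do not expect any serious obstacle here: the theorem is a formal bookkeeping consequence of the two preceding propositions. The one point that deserves a line of care is matching conventions — the pairing in Theorem \ref{thm the cap product} is defined with a shift parameter $T$, and one must check that composing $\otimes [\D]$ (shift $T_1$) with $\otimes [\theta_X]$ (shift $T_2$) gives the same $K$-theory class as the single product appearing in the propositions, independent of the choices of $T_1, T_2$; but this independence is precisely the well-definedness already asserted after Definition \ref{def times D} and after the definition of $\otimes [\theta_X]$, so it requires no new argument. The proof will therefore consist of two or three sentences citing Propositions \ref{prop dtheta for mfld} and \ref{prop thetad for mfld} and the identity $\otimes [1_X] = \mathrm{id}$.

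\begin{proof}
By Definition \ref{def times D} the map $\otimes [\D] : K^0_G(X) \to K_0(C^*_L(X)^G)$ is a well-defined group homomorphism, independent of the representative $P_L[\D]$ and of the shift parameter, and likewise $\otimes [\theta_X] : K_0(C^*_L(X)^G) \to K^0_G(X)$ is a well-defined group homomorphism. Pairing a $K^0_G(X)$-class with the trivial rank-one projection $[1_X]$ is the identity map on $K^0_G(X)$, i.e.\ $[p] \otimes [1_X] = [p]$ for every $[p] \in K^0_G(X)$. Hence Proposition \ref{prop dtheta for mfld} asserts exactly that
\[
(\otimes [\theta_X]) \circ (\otimes [\D]) = \mathrm{id}_{K^0_G(X)},
\]
while Proposition \ref{prop thetad for mfld} asserts exactly that
\[
(\otimes [\D]) \circ (\otimes [\theta_X]) = \mathrm{id}_{K_0(C^*_L(X)^G)}.
\]
Therefore $\otimes [\D]$ is an isomorphism with two-sided inverse $\otimes [\theta_X]$, as claimed.
\end{proof}
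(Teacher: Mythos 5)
Your proposal is correct and matches the paper's own argument, which simply combines Propositions \ref{prop dtheta for mfld} and \ref{prop thetad for mfld} to deduce the theorem. The only extra detail you spell out — that $[p]\otimes[1_X]=[p]$, so the first proposition really gives a left inverse — is the right (and implicit) bookkeeping step.
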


As a corollary, we have 
\begin{theorem}\label{thm qualify our k homo}
     In the case where $G$ is an almost connected Lie group and $X$ is a $G$-spin manifold with an isometric, proper and cocompact $G$-action, we have 
    \[
    K_0(C^*_L(X)^G)\cong KK_0^G(\mathbb{C}, C_{0}(X)).
    \]
\end{theorem}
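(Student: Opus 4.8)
The plan is to deduce Theorem~\ref{thm qualify our k homo} as a direct corollary of the Poincar\'e duality established in Theorem~\ref{thm pd for mfld}, using the corresponding Poincar\'e duality on the side of equivariant $KK$-theory. First I would invoke Theorem~\ref{thm pd for mfld}, which gives an isomorphism $\otimes[\D]\colon K^0_G(X)\xrightarrow{\ \cong\ }K_0(C_L^*(X)^G)$. Since $X$ is a $G$-spin manifold with a proper cocompact isometric action, Kasparov's equivariant Poincar\'e duality \cite{kasparov1988equivariant} (in the form applicable to proper actions of almost connected Lie groups, see also \cite{yu20book}) supplies a canonical isomorphism
\[
K^0_G(X)\;=\;KK^G_0(C_0(X),\mathbb{C})\;\cong\;KK^G_0(\mathbb{C},C_0(X)),
\]
implemented by the Kasparov product with the Dirac class $[\D]\in KK^G_0(C_0(X),\mathbb{C})$ and its dual. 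Composing these two isomorphisms yields
\[
K_0(C^*_L(X)^G)\;\cong\;K^0_G(X)\;\cong\;KK^G_0(\mathbb{C},C_0(X)),
\]
which is exactly the claimed statement.

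The key steps, in order: (1) recall that $K^0_G(X)$, as used throughout Subsection~\ref{subsec thom and poincare}, is by definition $KK^G_0(C_0(X),\mathbb{C})$ (representable $K$-theory of the proper $G$-space $X$); (2) cite the equivariant Poincar\'e duality for $G$-spin proper cocompact manifolds to identify $KK^G_0(C_0(X),\mathbb{C})$ with $KK^G_0(\mathbb{C},C_0(X))$ via cap product with the fundamental class $[\D]$; (3) invoke Theorem~\ref{thm pd for mfld} to identify $K^0_G(X)$ with $K_0(C^*_L(X)^G)$ via $\otimes[\D]$; (4) observe that the two dualities are compatible — both are implemented by the same Dirac operator $\D$ — so the composite isomorphism is natural, and in particular carries the $K$-homology class $[D]$ of a Dirac-type operator to its $KK$-theoretic counterpart. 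One should also note that this is precisely the promised verification, announced in Subsection~\ref{subsec localization algebra}, that the localization-algebra model of equivariant $K$-homology agrees with the $KK$-theoretic one in the $G$-spin case.

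I expect the main obstacle to be a bookkeeping one rather than a conceptual one: making sure the version of Kasparov's equivariant Poincar\'e duality being quoted is stated for \emph{proper cocompact} actions of an \emph{almost connected} (hence non-discrete, non-compact) Lie group $G$ on a $G$-spin manifold, and that the relevant $C^*$-algebras ($C_0(X)$ with the $G$-action, versus $C_0(X)\rtimes G$ or $C^*_r(G)$-level statements) are the right ones. In particular one must be slightly careful that $KK^G_0(\mathbb{C},C_0(X))$ here denotes the genuinely equivariant $KK$-group (not, say, $KK(\mathbb{C},C_0(X)\rtimes_r G)$), matching the convention under which $K_0(C^*_L(X)^G)$ was set up. Once the statements are aligned, the proof is the two-line composition above; the heavy lifting was already done in Theorem~\ref{thm pd for mfld}, whose proof (via the operators $\otimes[\D]$ and $\otimes[\theta_X]$) is itself an analytic incarnation of the same duality.
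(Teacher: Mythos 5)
Your conclusion is right, but your bookkeeping in step~(1) is inverted, and this error is what forces the (unneeded) detour through Kasparov's Poincar\'e duality. In the paper, $K^0_G(X)$ is the $G$-equivariant \emph{$K$-theory} of $X$: as stated at the start of Subsection~\ref{sub cap product}, its classes are represented by projections with $G$-compact support in $M_\infty(C(X))$, i.e.\ by $G$-vector-bundle data. Under the standard identification for proper cocompact actions, this group is $KK^G_0(\mathbb{C}, C_0(X))$, not $KK^G_0(C_0(X),\mathbb{C})$; the latter is the $K$-\emph{homology} of $X$. (Sanity check with $G$ trivial and $X$ compact: $KK_0(C(X),\mathbb{C})=K_0(X)$ is $K$-homology, whereas $K^0_G(X)=K^0(X)=KK_0(\mathbb{C},C(X))$.)

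Once that is corrected, the proof collapses to the single application of Theorem~\ref{thm pd for mfld} that the paper intends:
\[
K_0(C^*_L(X)^G)\;\xleftarrow{\;\cong\;}\;K^0_G(X)\;=\;KK^G_0(\mathbb{C},C_0(X)),
\]
with no second Poincar\'e-duality step. In your version the wrong equality $K^0_G(X)=KK^G_0(C_0(X),\mathbb{C})$ and the subsequent appeal to Kasparov PD cancel each other out, so the displayed chain happens to terminate at the right group. But the middle arrow, as you have written it, is not an equality of definitions, and your step~(4) --- asserting that ``both dualities are implemented by the same Dirac class $\D$'' so the composite is natural --- does not survive the corrected bookkeeping: applying $\D$ twice would be the identity (up to Bott), not the sought duality. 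In short: drop step~(2) entirely, fix the identification in step~(1) to $K^0_G(X)=KK^G_0(\mathbb{C},C_0(X))$, and you recover the paper's intended two-line corollary of Theorem~\ref{thm pd for mfld}.
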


Let $G$ be an almost connected Lie group and $X$ be a $G$-spin manifold with an isometric, proper and cocompact $G$-action. Let $E\to X$ be a $G$-equivariant vector bundle over $X,$ such that $E$ is $G$-spin as a manifold.   Let ${\D}^E$ be the Dirac operator on $E.$ Although $E$ is not a cocompact $G$-manifold, $\theta_E$ is as well defined  as  $\theta_X.$ We emphasize that for the definition of $\theta_E,$ $E$ is considered as a manifold.

Similar to above, one can define the following two homomorphisms
\[
\otimes [\D^E] : K_G^0(E)\to K_0(RVC^*_L(E)^G),\ \  \otimes [\theta_E] :   K_0(RVC^*_L(E)^G)\to K_G^0(E).
\]
Here $\D^E$ is the Dirac operator on $E,$ not the Dirac operator on $X$ twisted with $E.$

By exactly the same argument, we have 
\begin{theorem}\label{thm pd for bundle}
    The homomorphism
\[
\otimes [\D^E] : K_G^0(E)\to K_0(RVC^*_L(E)^G)\]
is an isomorphism with inverse 
\[\otimes [\theta_E] :   K_0(RVC^*_L(E)^G)\to K_G^0(E).
\]

\end{theorem}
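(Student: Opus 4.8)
The plan is to reduce Theorem~\ref{thm pd for bundle} to the already-established Poincar\'e duality for manifolds, Theorem~\ref{thm pd for mfld}, by checking that every construction and every lemma used in the proofs of Propositions~\ref{prop dtheta for mfld} and~\ref{prop thetad for mfld} goes through verbatim once ``$C^*_L(X)^G$'' is replaced by ``$RVC^*_L(E)^G$'' and ``$K_G^0(X)$'' by ``$K_G^0(E)$''. Concretely, I would first observe that $E$, being a $G$-spin manifold with a proper, cocompact (on the base), isometric $G$-action, carries its own Dirac operator $\D^E$ on $S(T^*E)$, and that the $K$-homology class $[\D^E]$ has a representative $P_L[\D^E]$ living in $RVC^*_L(E)^G$: the point is that the finite-propagation functional calculus $f(\D^E/t)$ applied after multiplication by a vertically compactly supported cutoff produces elements of the form $\chi_K T_t \chi_K$ with $K$ vertically compact, which is exactly the defining condition of $RVC^*_L(E)^G$ in Definition~\ref{def rvcl}. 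Likewise the dual covector family $\theta_E=\{\theta_x\}_{x\in E}$ is constructed from the distance function on $E$ exactly as $\theta_X$ was, and pairing with it lands in $K_G^0(E)$ because the relevant projections have vertically compact support.

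Next I would define the two homomorphisms $\otimes[\D^E]$ and $\otimes[\theta_E]$ by the same formulas $[p]\otimes[\D^E]=[t\mapsto P_L[\D^E](t+T)p]$ and $[P]\otimes[\theta_E]=[P(T)p[\theta_E]]$, using the ``vertically representable'' version of the pairing asserted at the end of Theorem~\ref{thm the cap product} and of Proposition~\ref{prop dtheta for mfld}'s ingredients. Then the two compositions are computed exactly as in Propositions~\ref{prop dtheta for mfld} and~\ref{prop thetad for mfld}: for $\otimes[\D^E]\circ\otimes[\theta_E]$ one identifies $P_L[\D^E](T)p[\theta_E]$ with $P_L[\D^E\hat\otimes 1+1\hat\otimes\theta_E](T)$ acting on $L^2(\mathrm{Cliff}(T^*E))$ via the analogues of \cite[Lemma 9.3.3]{yu20book} and \cite[Theorem 9.3.5]{yu20book}, concluding it equals $[1_E]$; for the other composition one uses the swap-of-variables unitary equivalence on $L^2(E)\otimes L^2(E)$ exactly as in the proof of Proposition~\ref{prop thetad for mfld}. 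Combining the two inverse relations gives the theorem.

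The main obstacle — and the reason this is not completely automatic — is verifying that the cited results from \cite[Chapter 9]{yu20book}, in particular the homotopy invariance / eveluation-type arguments and Lemma 9.3.3 and Theorem 9.3.5, survive the passage to the \emph{vertically} representable setting and to a \emph{non-discrete, non-compact} group $G$. In Willett--Yu these are proved with cutting-and-pasting over the base; as the paper itself emphasizes after Theorem~\ref{thm pd for bundle in intro}, cutting and pasting behaves badly for continuous group actions. The resolution is that everything here happens \emph{fiberwise} over a cocompact base, so the only place one needs global control is along the fibers $\mathbb{R}^{n}$ of $E$, where the vertically-compact-support condition provides exactly the replacement for compactness of the base that the Willett--Yu arguments require; one must check that the norm estimates $\lim_{t\to\infty}\|[P(t+T),p]\|=0$ and the finite-propagation bookkeeping are uniform over the (cocompact) base, which follows from cocompactness of the $G$-action on $X$ together with $G$-invariance of all the data. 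Once this uniformity is in hand, the proof is ``exactly the same argument'' as for Theorem~\ref{thm pd for mfld}, which is why the statement can be asserted as a corollary of the manifold case.
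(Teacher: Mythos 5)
Your proposal is correct and follows essentially the same route as the paper, which proves Theorem~\ref{thm pd for bundle} precisely by transporting the arguments of Propositions~\ref{prop dtheta for mfld} and~\ref{prop thetad for mfld} verbatim to the vertically representable setting ("by exactly the same argument"). Your additional remarks on why the vertically compact support condition substitutes for the cutting-and-pasting over the base, and on uniformity over the cocompact base, only elaborate on points the paper leaves implicit.
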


\begin{proof}
For any $[p]-[q]\in K_G^0(E)$,  $p-q$ has vertically compact support. Hence, by definition of the pairing, $([p]-[q])\otimes [\D^E]$ is also represented by formal difference with vertically compact support. Thus, $\otimes[\D^E]$ is a well defined homomorphism from $K_G^0(E)$ to $K_0(RVC^*_L(E)^G).$ By the same reason, one can see that $\otimes [\theta_E]$ is also well defined. 

Now for $[p]-[q]\in  K_G^0(E)$, such that $p-q$ has vertically compact support, the proof for Proposition \ref{prop dtheta for mfld} applies verbatim to give  
    \[
    ([p]-[q])\otimes [\D^E]\otimes [\theta_E]=([p]-[q])\otimes [1_E].
    \]
    Note that $[1_E]\in (K_G^0(E))^+$ is the unit. On the other hand, for $[P]\in K_0(RVC^*_L(E)^G)$, by the proof for Proposition \ref{prop thetad for mfld}, we have 
    \[
    [P]\otimes [\theta_E]\otimes [\D^E]=[P].
    \]
    These imply that $\otimes [\D^E]$ and $\otimes [\theta_E]$ are isomorphisms that are inverses of each other. The proof is then completed.
\end{proof}

We will later apply the following standard induction isomorphism to identify
the equivariant \(K\)-theory of the vector bundle over an orbit with that of its
 fiber.

Let \(H\leq G\) be a compact subgroup, and let \(V\) be a finite-dimensional
orthogonal \(H\)-representation. Set
\[
E:=G\times_H V\longrightarrow G/H,
\qquad [g,v]\longmapsto gH.
\]
Thus \(E\) is the \(G\)-equivariant vector bundle induced from the
\(H\)-representation \(V\).

\begin{theorem}[Induction isomorphism]
\label{thm:induction-k-theory}
The induction map gives a natural isomorphism
\[
\operatorname{Ind}_H^G\colon
K_H^0(V)\xrightarrow{\cong}K_G^0(E).
\]
\end{theorem}


\section{Localization at $K$-homology  level }\label{sec main proof}

In this section, we present the proof of the main result, namely Theorem \ref{thm main}. The proof is carried out in two steps. First, we localize the equivariant $K$-homology class of the de Rham operator to the tubular neighborhoods of zero submanifolds of a Morse-Bott differential 1-form by perturbing the operator with this field. Next, we compute the equivariant $K$-homology class of the de Rham operator using the Poincar\'e duality established in Subsection \ref{subsec thom and poincare} and equivariant $K$-theory.

\subsection{Localization of the class}

In this subsection, we use the Witten perturbation to localize the equivariant $K$-homology class of the de Rham operator to the tubular neighborhoods of zero submanifolds of a Morse-Bott differential 1-form.

Before proceeding, let us recall the definition of a Morse–Bott differential 1-form (\cite{equivariantdifferentialtopology}).

\begin{definition}\label{def morsebott covector filed}
     A smooth function $f:X\rightarrow \mathbb{R}$ is
  Morse–Bott if its critical points form a disjoint union of connected submanifolds $Y_{1},\dots, Y_{r},$ and for every  $x\in Y_{i},$ the Hessian of $f$ is nondegenerate on all subspaces of $T_{x}X$ intersecting $T_{x}Y_{i}$ transversally. The differential 1-form $df$ associated with any Morse–Bott function $f$ is referred to as a Morse–Bott differential 1-form.
\end{definition}
 \begin{remark}
    In \cite{equivariantdifferentialtopology}, Wasserman demonstrated that every finite-dimensional $G$-manifold $X,$ where both $G$ and $X$ are compact, admits equivariant Morse-Bott functions. Consequently, we assert that such manifolds also admit an equivariant Morse-Bott differential 1-form. 
    \end{remark}

Then, we define an equivariant Morse-Bott differential 1-form for a noncompact Lie group:
\begin{definition}
A differential 1-form \( A \) is called an \emph{equivariant Morse--Bott differential 1-form} for a noncompact Lie group \( G \) if there exists a \( G \)-invariant Morse--Bott function \( f \) such that \( A = df \), and the following conditions hold:
\begin{itemize}
    \item The critical set of \( f \) consists of submanifolds \( \{Y_i\} \), each of which is \( G \)-invariant.
    \item For each \( Y_i \), there exists a \( G \)-invariant tubular neighborhood modeled by a \( G \)-equivariant vector bundle \( E_i \to Y_i \), together with a \( G \)-equivariant embedding \( h: E_i \to X \), such that:
    \begin{itemize}
        \item The bundle \( E_i \) admits an orthogonal \( G \)-invariant splitting \( E_i = E_i^+ \oplus E_i^- \),
        \item There exists a \( G \)-invariant open neighborhood \( U_i \subset E_i \) of the zero section such that for all \( e = (e_+, e_-) \in U_i \),
        \[
        f(h(e)) = c - \|e_-\|^2 + \|e_+\|^2,
        \]
        where \( c = f|_{Y_i} \) is constant, and the norms are taken with respect to a \( G \)-invariant Riemannian metric on \( E_i \).
    \end{itemize}
\end{itemize}

\end{definition}

Now we consider the existence of the equivariant Morse-Bott differential 1-form we defined. More generally, in the examples we will present below, the critical submanifolds of the equivariant covector fields are some orbits generated by single points.
\begin{proposition}
\label{exist}
    Let \(G\) be an almost connected Lie group, and assume that \(G\) contains a compact subgroup isomorphic to \(S^1\). We identify this subgroup with \(S^1\) and let it act on \(S^2\) by rotations about the vertical axis. Consider the \(2\)-sphere \(S^2 \subset \mathbb{R}^3\), positioned so that its south pole is at the origin. The height function
\[
f:S^2 \longrightarrow \mathbb{R}, \qquad f(x,y,z)=z,
\]
is invariant under the \(S^1\)-action and therefore induces a well-defined smooth function
\[
\begin{aligned}
F:G\times_{S^1}S^2 &\longrightarrow \mathbb{R},\\
[g,s] &\longmapsto f(s).
\end{aligned}
\]
Consequently, \(dF\) is a \(G\)-equivariant Morse--Bott differential \(1\)-form on \(G\times_{S^1}S^2\). Its critical submanifolds are precisely the \(G\)-orbits through the points represented by the north and south poles of \(S^2\).
\end{proposition}
\begin{proof}
     We investigate the action of \(S^1\) on \(S^2\), which is the rotation about the z-axis. It can be seen that \(f\) is an equivariant Morse-Bott function under this action, and its critical points are the south pole and the north pole. It implies that  \(F\) is well defined. 
     
     Let \(x_0\) and \(x_1\) denote the south pole and the north pole, respectively. We can conclude that the critical submanifolds of \(F\) are  \(G\times_{S^1}\{x_{0}\}\) and  \(G\times_{S^1}\{x_{1}\}\). Since \(G\times_{S^1} S^2\) has the structure of a fiber bundle with base space \(G/S^1\) and fiber \(S^2\), the function \(F\) essentially depends only on the points in the fiber \(S^2\). This implies that the non-degeneracy of the Hessian  of \(f\) at the south pole and the north pole can lead to the non-degeneracy of the Hessian of \(F\) when restricted to \(T_{x}(G\times_{S^1}S^2)/T_{x}(G\times_{S^1}\{x_{i}\})\) for any \(x\in G\times _{S^1}\{x_{i}\}\) (\(i=0,1\)).

      Finally, for each \( G\times_{S^1}\{x_{i}\} \), there exists a \( G \)-invariant tubular neighborhood modeled by a \( G \)-equivariant nomoarl bundle \(N_{i}\) of \( G\times_{S^1}\{x_{i}\} \) , together with a \( G \)-equivariant embedding \( h: N_{i} \to G\times_{S^1}S^2\) induce by the exponential map such that: The bundle \( N_i \) admits an orthogonal \( G \)-invariant splitting \( N_i = N_i^+ \oplus N_i^- \). In fact, we have \(N_0 = N_0^+\) and \(N_1 = N_1^-\).
         There exists a \( G \)-invariant open neighborhood \( U_i \subset N_i \) of the zero section such that for all \( e = (e_+, e_-) \in U_i \),
        \[
        F(h(e)) = c - \|e_-\|^2 + \|e_+\|^2,
        \]
        where \( c = f|_{G\times_{S^1}\{x_{i}\}} \) is constant, and the norms are taken with respect to a \( G \)-invariant Riemannian metric on \( N_i \).

  Thus, we have verified that \(dF\) is an equivariant Morse-Bott differential 1-form for the almost connected Lie group \(G\) and the critical submanifolds of \(dF\) are  some orbits generated by single points.
\end{proof}

\begin{remark}
    In fact, for any almost connectd Lie group \(G\) and one of its compact subgroups \(H\), if there exists a smooth compact manifold \(S\) without boundary admitting a smooth \(H\)-action, then there exists an equivariant Morse-Bott differential 1-form on \(G \times_H S\), and the critical submanifolds of the equivariant covector fields are still some orbits generated by single points. More generally, this construction applies to every smooth manifold without boundary equipped with a proper and cocompact action of an almost connected Lie group \(G\). Indeed, by the global slice theorem, there exist a maximal compact subgroup \(K\subset G\) and a compact \(K\)-manifold \(S\) such that
    \[
       M\cong G\times_K S
     \]
    as \(G\)-manifolds. The construction is similar to the case of \(G \times_{S^1} S^2\).  First, construct an \(H\)-equivariant Morse-Bott function \(\psi\) on \(S\), whose critical submanifolds are orbits generated by single points. The existence of such a function \(\psi\) is guaranteed by the discussion in  \cite{equivariantdifferentialtopology}. Then, \(\psi\) induce a \(G\)-equivariant smooth function:
    \begin{align*}
        \Psi: &G\times_{H} S\rightarrow \mathbb{R}\\
        &[g,s]\mapsto \psi(s)
        \end{align*}
        such that \(d\Psi\) is an equivariant Morse-Bott differential 1-form for almost connected Lie group \(G\) and the critical submanifolds of \(d\Psi\) are  some orbits generated by single points. The proof of this conclusion is similar to that of Proposition \ref{exist}.
\end{remark}

Let $G$ be an almost connected Lie group acting properly, cocompactly and isometrically on a complete Riemannian manifold $X.$ Assume that $A=d f$ is a $G$-invariant Morse–Bott differential 1-form over $X,$ whose zero subset $Z$ consists of $G$-orbits 
\[
Z=Z_1\cup Z_2 \cup \cdots \cup Z_k.
\]

Choose $\delta>0$ satisfying the following conditions 
\begin{enumerate}
\item $\|A(x)\|\geq 1, \forall x\in X\backslash N_{\delta}(Z),$ where  $N_{\delta}(Z)$ is the $\delta$-tubular neighborhood of $Z.$
\item $N_{1000\delta}(Z_i)\cap N_{1000\delta}(Z_j)=\emptyset, \forall 1
    \leq i\neq j\leq k, $ where $N_{1000\delta}(Z_i)$ is the $1000\delta$-tubular neighborhood of $Z_i.$

\end{enumerate}

Note that the zero subset $Z$ consists of finitely many $G$-orbits that can be separated  by some $\delta>0$ is a consequence of the action being proper, cocompact and isometric.

Consider the right Clifford multiplication by $A$ twisted by grading
\[
w\to (-1)^{\partial w}wA, \quad \forall \text{ homogeneous } w\in \text{Cliff}(T^*X).
\]
This defines a bounded operator on $L^2(\text{Cliff}(T^*X)),$ which we continue to denote by $A.$

Let $D$ be the de Rham operator on $X$. Then it is straightforward to verify that 
\[
[D]=[D+\lambda A]\in K_0(C^*_L(X)^G), \quad \lambda >0.
\]
Thus, to compute $[D],$ it suffices to compute $[D+\lambda A].$

The following lemma is from \cite{liu2024equivariant}.

\begin{lemma}
    Let $\lambda$ be a positive number. Then for all $t>0,$ as essentially selfadjoint operators, 
    \[(t^{-1}D+\lambda A)^2 \geq t^{-2}D^2+\lambda^2 \inf\limits_{x\in X} \|A(x)\|^2 -Ct^{-1}\lambda, \]
    where $C$ is a positive number depending only on the manifold $X$ and the differential 1-form $A.$ In particular, restricting to $X\backslash N_{\delta}(Z),$ 
       \[(t^{-1}D+\lambda A)^2 \geq \lambda^2 -Ct^{-1}\lambda.\]
      
\end{lemma}
Thus we can choose $\lambda>0$ properly such that $(t^{-1}D+\lambda A)^2$ is bounded below by $1$ uniformly for $t\in [1,\infty).$ Now let $\psi:(-\infty, \infty)\to [-1,1]$  be a smooth, odd function satisfying  conditions in line \eqref{eq characteristic condition}. We further choose $\psi$ properly such that 
\[
1-\psi^2(s)<\frac{1}{10000}, \quad \forall \|s\|\geq 1,
\]
and $T_0\in \mathbb{R}_+$ sufficiently large such that
\[
\text{propagation of } \psi(t^{-1}D+\lambda A)<\delta, \quad \forall t\in [T_0,+\infty).
\]
 The latter can be obtained by demanding that the support of $\hat{\psi}$ lies in $[-T_0\delta, T_0 \delta).$

Assume that under the $\mathbb{Z}_2$ grading by the parity of forms, 
\[\psi(t^{-1}D+\lambda A)=\begin{pmatrix}
    0 & U_{t,\lambda}(t)\\
    V_{t,\lambda}(t) & 0 
\end{pmatrix}. \]
As constructed in Subsection \ref{sub k homo class}, the $G$-equivariant $K$-homology class of $D+\lambda A$ is represented by 
\[
[t\mapsto P_{\lambda}(t)]-\left[\begin{pmatrix}
    1 & 0 \\
    0 & 0
\end{pmatrix}\right]
\]
where 
\begin{equation}\label{eq Pt of D plus lambda A}
P_{\lambda}(t)=\begin{pmatrix}
    1-(1-U_{t,\lambda}V_{t,\lambda} )^2 & (2-U_{t,\lambda}V_{t,\lambda} )U_{t,\lambda} (1-V_{t,\lambda}U_{t,\lambda} )\\
    V_{t,\lambda} (1-U_{t,\lambda}V_{t,\lambda} ) & (1-V_{t,\lambda}U_{t,\lambda} )^2
\end{pmatrix}.
\end{equation}

In the following, we decompose 
\[L^2(\text{Cliff}(T^*X)) = L^2(N_{50\delta }(Z)) \oplus L^2(N_{100\delta }(Z)\backslash N_{50\delta }(Z) ) \oplus L^2(N^c_{100\delta }(Z) ) ,
\]
where $L^2(N_{50\delta }(Z)),$  $L^2(N_{100\delta }(Z)\backslash N_{50\delta }(Z) ), $ and $L^2(N^c_{100\delta }(Z) ) $ represent the restriction of $L^2(\text{Cliff}(T^*X)) $ to $N_{50\delta }(Z),$  $N_{100\delta }(Z)\backslash N_{50\delta }(Z),$ and the complement $ N^c_{100\delta }(Z),$ respectively.

\begin{lemma}\label{lem of roe}
    Restricting to 
    \[
    L^2(N^c_{50\delta }(Z) ) =  L^2(N_{100\delta }(Z)\backslash N_{50\delta }(Z) ) \oplus L^2(N^c_{100\delta }(Z) ),
    \]
    we have 
    \[
    \|1-\psi^2(t^{-1}D+\lambda A)\|\leq \frac{1}{10000}
    \]
    uniformly for $t\in [T_0,\infty)$
\end{lemma}

\begin{proof} The proof follows essentially from \cite[Lemma 2.5]{roe2016positive}.
    Since  $(t^{-1}D+\lambda A)^2$  is uniformly bounded below by 
$1$ on  $N^c_{40\delta }(Z)$ for all $t\in [T_0,\infty),$ it admits a Friedrichs extension on the Hilbert space  $L^2(N^c_{40\delta }(Z) ).$ This extension, which we denote by $E_t,$ is also bounded below by $1.$  

Observe that $g=1-\psi^2$ is an even function, thus 
\[g(x)=2\int_0^\infty \hat{g}(s)\cos (sx)ds. \]
Note that $\hat{g}$ has compact support $[-2T_0\delta,2T_0\delta]$.  For all $0\leq s\leq 2T_0\delta,$ and $t\geq T_0,$ we have 
\[
\cos(s(t^{-1}D+\lambda A))=\cos (s\sqrt{E_t}) 
\]
restricting on $L^2(N^c_{50\delta }(Z) ).$ 
This implies that for $t\geq T_0,$
    \begin{eqnarray*}
        g(t^{-1}D+\lambda A)&=&\int_0^{T_0\delta} \hat{g}(s)\cos (s(t^{-1}D+\lambda A))ds\\
        &=&\int_0^{T_0\delta} \hat{g}(s)\cos (s\sqrt{E_t})ds\\
        &=&  g(\sqrt{E_t})
    \end{eqnarray*}
    when restricted on $L^2(N^c_{50\delta }(Z) ).$ The lemma then follows from the facts that $E_t$ is uniformly bounded below by $1$ and that 
    \[
1-\psi^2(s)<\frac{1}{10000}, \quad \forall \|s\|\geq 1.
\]
\end{proof}
As a corollary, we have 
\begin{corollary}\label{cor the cut off}
    Consider $P_{\lambda}(t)$ defined in \eqref{eq Pt of D plus lambda A}. Then, with respect to the decomposition 
    \[
    L^2(\text{Cliff}(T^*X)) = L^2(N_{100\delta }(Z)) \oplus L^2(N^c_{100\delta }(Z) ),
    \]
    we have 
    \[
    \left\| P_{\lambda}(t)- \begin{pmatrix}
        \overline{P}_{\lambda}(t) & 0 \\
        0 & \begin{pmatrix}
            1 & 0 \\
            0 & 0
        \end{pmatrix}
    \end{pmatrix} \right\|\leq \frac{1}{1000},\quad  \forall t\in [T_0, \infty)
    \]
    where $\overline{P}_{\lambda}(t)$ is an almost idempotent in  $(RVC^*_L(N_{50\delta}(Z)))^+$.
\end{corollary}
\begin{proof}
    By definition, the propagation of $P(t)$ is less than $6\delta$ for every $t\in [1,\infty),$ thus with respect to the decomposition
    \[
        L^2(\text{Cliff}(T^*X)) 
        =L^2(N_{50\delta }(Z)) \oplus L^2(N_{100\delta }(Z)\backslash N_{50\delta }(Z) ) \oplus L^2(N^c_{100\delta }(Z) ),
    \]
    $P_{\lambda}(t)$ equals 
    \[
    \begin{pmatrix}
        P_{11}(t) & P_{12}(t) & 0\\
        P_{21}(t) & P_{22}(t) & P_{23}(t)\\
        0 & P_{32}(t) & P_{33}(t)
    \end{pmatrix}.
    \]
    However, Lemma \ref{lem of roe} shows that
    \[
\left\|
\begin{pmatrix}
    P_{22}(t) & P_{23}(t)\\
    P_{32}(t) & P_{33}(t)
\end{pmatrix}-\begin{pmatrix}
    1 & 0\\
    0 & 0
\end{pmatrix}
\right\|\leq 10 \frac{1}{10000}=\frac{1}{1000},
    \]
    then the corollary follows immediately.
\end{proof}
Recall that $t\mapsto \overline{P}_{\lambda}(t)$ is a path of almost idempotents,
\[
[t\mapsto \overline{P}_{\lambda}(t)]-\left[\begin{pmatrix}
    1 & 0 \\
    0 & 0
\end{pmatrix}\right]
\]
representing a class in $K_0(RVC^*_L(N_{100\delta}(Z) )^G).$ Then we have 
\begin{theorem}\label{thm the cut off}
    The $G$-equivariant $K$-homology class 
    \[
    [D]=[D+\lambda A]\in K_0(C^*_L(X)^G)
    \]
    lies in the image of 
    \[
    \iota_* :  K_0(RVC^*_L(N_{100\delta}(Z))^G)\to K_0(C^*_L(X)^G),
    \]
    where $\iota_*$ is the 
homomorphism induced by the embedding
    \[
    \iota : N_{100\delta}(Z)\to X.
    \]

\end{theorem}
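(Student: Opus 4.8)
The plan is to obtain Theorem~\ref{thm the cut off} as an essentially formal consequence of Corollary~\ref{cor the cut off}, by recognizing the block-diagonal approximant produced there as the image under $\iota_*$ of a class manufactured from the corner $P'_\lambda$. The first step is to check that $t\mapsto P'_\lambda(t)$ defines a $K$-homology class. By construction the path $t\mapsto P_\lambda(t)-\begin{pmatrix}1&0\\0&0\end{pmatrix}$ lies in $M_2(C^*_L(X)^G)$, is uniformly norm-continuous, consists of almost idempotents, and has propagation tending to $0$; cutting it down to the corner determined by $L^2(N_{100\delta}(Z))$ and using the estimate in Corollary~\ref{cor the cut off} (which is exactly what certifies vertical representability), one gets that $t\mapsto P'_\lambda(t)$ is an almost-idempotent path in $(RVC^*_L(N_{100\delta}(Z))^G)^+$ — in fact already in the image of $(RVC^*_L(N_{50\delta}(Z))^G)^+$ — with $P'_\lambda(t)-\begin{pmatrix}1&0\\0&0\end{pmatrix}\in M_2(RVC^*_L(N_{100\delta}(Z))^G)$, where the right-hand idempotent denotes the grading projection onto even forms over $N_{100\delta}(Z)$. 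By the standard stability of $K_0$-classes under small perturbations of almost idempotents, this data yields a well-defined class
\[
\alpha:=\Bigl[t\mapsto P'_\lambda(t)\Bigr]-\Bigl[\begin{pmatrix}1&0\\0&0\end{pmatrix}\Bigr]\in K_0\bigl(RVC^*_L(N_{100\delta}(Z))^G\bigr).
\]

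Next I would make the map $\iota_*$ explicit. The inclusion $\iota\colon N_{100\delta}(Z)\hookrightarrow X$ is a $G$-equivariant isometric, hence coarse, embedding, and $\iota_*$ is the induced map on $K$-theory constructed as in Subsection~\ref{subsec localization algebra} from the small-support isometries of Lemma~\ref{lem covering map} and the adjoint homomorphism preceding Lemma~\ref{lem of adv}; for this particular $\iota$ the relevant isometries may be taken to be the extension-by-zero embedding of $L^2(\text{Cliff}(T^*X)|_{N_{100\delta}(Z)})\otimes L^2(G)$ into $L^2(\text{Cliff}(T^*X))\otimes L^2(G)$. Extension by zero preserves finite propagation, $G$-continuous local compactness and the decay of propagation, and sends the vertically representable algebra into the full localization algebra (the vertical constraint is simply dropped), so it induces a $*$-homomorphism $\iota_\#\colon RVC^*_L(N_{100\delta}(Z))^G\to C^*_L(X)^G$ realizing $\iota_*$; Lemma~\ref{lem of adv} guarantees independence of the module choices. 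On unitalizations $\iota_\#$ fixes the scalar part and applies extension by zero to the algebra part, so that, with respect to the decomposition $L^2(\text{Cliff}(T^*X))=L^2(N_{100\delta}(Z))\oplus L^2(N^c_{100\delta}(Z))$,
\[
\iota_\#\bigl(P'_\lambda(t)\bigr)=P'_\lambda(t)\oplus\begin{pmatrix}1&0\\0&0\end{pmatrix}_{N^c_{100\delta}(Z)},\qquad \iota_\#\Bigl(\begin{pmatrix}1&0\\0&0\end{pmatrix}\Bigr)=\begin{pmatrix}1&0\\0&0\end{pmatrix}_X,
\]
where the subscripts denote the grading projections onto even forms over the indicated regions.

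Finally I would compare with Corollary~\ref{cor the cut off}: the operator $\iota_\#(P'_\lambda(t))$ is precisely the block-diagonal path shown there to satisfy $\|P_\lambda(t)-\iota_\#(P'_\lambda(t))\|\le\tfrac1{1000}$ for all $t\in[1,\infty)$, far below the threshold at which the perturbation argument applies, so $[t\mapsto\iota_\#(P'_\lambda(t))]=[t\mapsto P_\lambda(t)]$ in $K_0(C^*_L(X)^G)$. Hence
\[
\iota_*(\alpha)=\Bigl[t\mapsto P_\lambda(t)\Bigr]-\Bigl[\begin{pmatrix}1&0\\0&0\end{pmatrix}_X\Bigr]=[D+\lambda A]=[D],
\]
by the definition of $[D+\lambda A]$ from Subsection~\ref{sub k homo class} and the homotopy invariance $[D]=[D+\lambda A]$ recorded above; this exhibits $[D]$ in the image of $\iota_*$.

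The only step requiring genuine — though mild — care is the one in the second paragraph above: verifying that extension by zero is compatible with the vertically representable structure and with the module-independence of Lemma~\ref{lem of adv}, together with the non-unital $K_0$ bookkeeping that makes the extra summand $\begin{pmatrix}1&0\\0&0\end{pmatrix}_{N^c_{100\delta}(Z)}$ and the adjoined-scalar parts match up exactly under $\iota_\#$. Everything else is a direct reading of Corollary~\ref{cor the cut off} and its proof.
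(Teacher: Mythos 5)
Your proposal is correct and follows essentially the same route as the paper: the theorem is obtained directly from Corollary~\ref{cor the cut off} by reading the corner $P'_\lambda$ as a class in $K_0(RVC^*_L(N_{100\delta}(Z))^G)$, realizing $\iota_*$ via extension by zero, and using the $\tfrac{1}{1000}$ estimate together with stability of $K_0$ under small perturbations of almost idempotents to identify $\iota_*$ of that class with $[D+\lambda A]=[D]$. The additional bookkeeping you supply (the explicit form of $\iota_\#$ on unitalizations and the matching of the scalar and complementary-region summands) is exactly the detail the paper leaves implicit.
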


In the next section, we will study the representation information it carries by localizing the K-homology class of the de Rham operator.

\subsection{The computation via Poincar\'e duality}

Theorem~\ref{thm the cut off} establishes part~\textup{(1)} of
Theorem~\ref{thm main}. We now turn to the proof of
part~\textup{(2)}. The main ingredient is the Poincar\'e duality
isomorphism established in Theorem~\ref{thm pd for bundle}, applied
to the normal bundles of the components of the zero set.

We first establish the following consequence of Poincar\'e duality.

\begin{theorem}\label{thm main pd}
Let $G$ be an almost connected Lie group acting properly, cocompactly,
and isometrically on a $G$-spin manifold $X$. Suppose that there exists
a $G$-invariant Morse--Bott $1$-form $A$ on $X$ whose zero set
decomposes as
\[
Z=\bigsqcup_{i=1}^k Z_i=\bigsqcup_{i=1}^k G/H_i,
\]
where each $H_i\leq G$ is a compact subgroup. For each \(i\), let
\(E_i\to Z_i\) be the normal bundle of \(Z_i\) in \(X\), and let
\(V_i=(E_i)_{eH_i}\) be the normal fiber over \(eH_i\in G/H_i\).
Then, for each \(i\), Poincar\'e duality induces a canonical
isomorphism
\[
-\otimes [\D^{E_i}] \colon
K^0_G(E_i)
\longrightarrow
K_0(RVC^*_L(E_i)^G).
\]
Let \([A_{E_i}]\) denote the \(K\)-theory class defined by the
restriction of \(A\) to \(E_i\). Then
\[
\iota_{*}\left(
\sum_{i=1}^k [A_{E_i}]\otimes[\D^{E_i}]
\right)
=
[D]\in K_0(C^*_L(X)^G).
\]
\end{theorem}

\begin{proof}[Proof of Theorem~\ref{thm main pd}]
Recall that
\begin{eqnarray*}
K_0(RVC^*_L(N_{100\delta}(Z))^G)
&=&
\bigoplus_{i=1}^k
K_0(RVC^*_L(N_{100\delta}(Z_i))^G)
\\
&=&
\bigoplus_{i=1}^k
K_0(RVC^*_L(E_i)^G),
\end{eqnarray*}
where $E_i$ is the normal bundle of $Z_i$ in $X$. By the localization
and decomposition argument given above, the class $[D]$ therefore
decomposes as
\[
[D]
=
\bigoplus_{i=1}^k
\iota_*[D^{E_i}+\lambda A_{E_i}],
\]
where $D^{E_i}$ is the de Rham operator on $E_i$,
$A_{E_i}$ is the natural extension to $E_i$ of the differential
$1$-form $A$, and
\[
[D^{E_i}+\lambda A_{E_i}]
\in K_0(RVC^*_L(E_i)^G)
\]
is the vertically representable \(K\)-homology class of
\(D^{E_i}+\lambda A_{E_i}\).

It remains to identify each local class
\[
[D^{E_i}+\lambda A_{E_i}]
\]
under the Poincar\'e duality isomorphism. To this end, we use the
$G$-spin structure on $E_i$.

Recall that each $Z_i$ is diffeomorphic to $G/H_i,$ and that $N_{100\delta}(Z_i)$ is equivariant diffeomorphic to $E_i.$
It follows from the preceding discussion that both $Z_i$ and $E_i$
are $G$-spin manifolds. Let $S(T^*(E_i))$ denote the spinor bundle on
$E_i$, and let $\D^{E_i}$ be the Dirac operator on $E_i$. Consider the
Clifford action of $A_{E_i}$ on $S(T^*(E_i))$, together with its
extension ${A}_{E_i}$ to $L^2(S(T^*(E_i)))$. By
\cite[Lemma~9.33]{yu20book}, we have
\begin{equation}\label{eq final product}
[D^{E_i}+\lambda A_{E_i}]
=
[\D^{E_i}]\otimes[{A}_{E_i}],
\end{equation}
where $[{A}_{E_i}]$ is the class in $K_G^0(E_i)$ defined by
${A}_{E_i}$; see Subsection~\ref{sub k homo class}. Combining
\eqref{eq final product} with Theorem~\ref{thm pd for bundle} (we emphasize that we are carrying out the Poincar\'e pairing on the total space regarded as a non-cocopmact manifold, other than fiberwise pairing on a fiber manifold), we
conclude that, under the Poincar\'e duality isomorphism, the class
\([A_{E_i}]\) corresponds to
\([D^{E_i}+\lambda A_{E_i}]\). Substituting these identifications into
the above decomposition of \([D]\) proves the desired formula.
\end{proof}

To obtain the representation-theoretic formula required in
Theorem~\ref{thm main}, it remains to determine the preimage of
\([A_{E_i}]\) under the induction isomorphism
\[
\operatorname{Ind}_{H_i}^{G}\colon
K^0_{H_i}(V_i)
\longrightarrow
K^0_G(E_i).
\]
We therefore begin by giving a more explicit description of the group
\(K^0_{H_i}(V_i)\).

We recall some standard ideals and results in equivariant twisted \(K\)-theory. 

Let \(M\) be a smooth manifold equipped with a proper cocompact action of a compact Lie group \(H\), and let \(E \to M\) be a finite-rank \(H\)-equivariant real vector bundle endowed with an \(H\)-invariant Euclidean metric. The equivariant compactly supported \(K\)-theory group \(K_H^0(E)\) admits an Atiyah--Bott--Shapiro description in terms of \(H\)-equivariant \(\mathbb{Z}/2\)-graded module bundles over the complex Clifford algebra bundle \( \operatorname{Cl}(E) \longrightarrow M \),
subject to the usual stabilization and Clifford-extension relations.

Suppose, in addition, that \(E\) has even rank and is equipped with an \(H\)-equivariant \(\operatorname{Spin}^c\)-structure. In this case, \(\operatorname{Cl}(E)\) is \(H\)-equivariantly Morita equivalent to the trivial algebra bundle, and the preceding description gives rise to the usual equivariant Thom isomorphism. In general, however, the Clifford algebra bundle need not be Morita trivial. To accommodate this more general situation, one may abstract the above construction and, starting from any \(H\)-equivariant bundle \(\mathcal{A}\) of \(\mathbb{Z}/2\)-graded finite-dimensional complex algebras over \(M\), define a generalized \(K\)-theory in terms of \(H\)-equivariant \(\mathcal{A}\)-module bundles. This is the basic idea underlying twisted equivariant \(K\)-theory.

The twisted \(K\)-theory associated with \(\mathcal{A}\) depends only on the \(H\)-equivariant Morita equivalence class of \(\mathcal{A}\). Such a Morita equivalence class is called a \emph{twisting}. Under suitable assumptions on the algebra bundles under consideration, these twistings form the equivariant graded Brauer group
\[
\operatorname{GBr}_H(M).
\]

We now specialize the preceding discussion to the situation relevant to us. 
Let \(M=\mathrm{pt}\), and let
\[
\rho\colon H \longrightarrow O(V)
\]
be a finite-dimensional orthogonal real representation of the compact Lie group
\(H\). Regarding \(V\) as an \(H\)-equivariant real vector bundle over a point,
its complex Clifford algebra determines a twisting
\[
\tau(V)\in \operatorname{GBr}_H(\mathrm{pt}).
\]
Under the standard characteristic-class description of equivariant twistings,
this twisting is represented by
\[
\tau(V)
=
\bigl(
    \dim(V)\bmod 2,\,
    w_1^H(V),\,
    W_3^H(V)
\bigr),
\]
where
\[
w_1^H(V)\in H_H^1(\mathrm{pt};\mathbb{Z}/2)
\]
is the first equivariant Stiefel--Whitney class and
\[
W_3^H(V)
\coloneqq
\beta\bigl(w_2^H(V)\bigr)
\in H_H^3(\mathrm{pt};\mathbb{Z})
\]
is the third integral equivariant Stiefel--Whitney class. Here \(\beta\) denotes
the Bockstein homomorphism associated with the short exact sequence
\[
0\longrightarrow \mathbb{Z}
\xrightarrow{\times 2}
\mathbb{Z}
\longrightarrow \mathbb{Z}/2
\longrightarrow 0.
\]
We refer to \cite{DonovanKaroubi,FHTI} for the general theory of graded Brauer
groups and their relation to twisted \(K\)-theory.

\begin{theorem}[Twisted equivariant Thom isomorphism]
\label{thm:twisted-thom-representation}
Let \(H\) be a compact Lie group, and let \(V\) be a finite-dimensional
orthogonal real \(H\)-representation. Then there is a canonical isomorphism
\[
K_H^0(V)
\cong
K_H^{-\tau(V)}(\mathrm{pt}),
\]
where \(\tau(V)\in \operatorname{GBr}_H(\mathrm{pt})\) is the twisting
determined by the complex Clifford algebra \(\operatorname{Cl}(V)\).
\end{theorem}

This is precisely the case \(X=\mathrm{pt}\slash \slash H\), \(\sigma=0\), and
\(n=0\) of the general twisted Thom isomorphism
\cite[Equation~(3.19)]{FHTI}. 

In the terminology of \cite{FHTI}, for any twisting \( \tau \in \operatorname{GBr}_H(\mathrm{pt}) \), the group
\(K_H^{\tau}(\mathrm{pt})\) is called the \(\tau\)-\emph{twisted representation group} of \(H\) and is denoted by
\( R^{\tau}(H) \).

In the present case, the sign of the twisting is in fact immaterial. Indeed,
the representation \(V\oplus V\) carries the canonical \(H\)-invariant complex
structure
\[
J(v_1,v_2)=(-v_2,v_1),
\]
and hence admits an \(H\)-equivariant \(\operatorname{Spin}^c\)-structure.
Consequently, the associated Clifford twisting is Morita trivial, and therefore
\[
2\tau(V)
=
\tau(V\oplus V)
=
0
\qquad\text{in }\operatorname{GBr}_H(\mathrm{pt}).
\]
It follows that
\[
-\tau(V)=\tau(V).
\]
Thus, for notational convenience, we shall write the twisted Thom isomorphism
in the equivalent form
\[
K_H^0(V)
\cong
K_H^{\tau(V)}(\mathrm{pt}).
\]

In the untwisted case, equivariant \(K\)-theory of a point recovers the
complex representation ring of \(H\):
\[
K_H^0(\mathrm{pt})\cong R(H),
\qquad
K_H^1(\mathrm{pt})=0.
\]
Thus, elements of \(K_H^0(\mathrm{pt})\) are represented by virtual
finite-dimensional complex representations of \(H\). More generally, twisted
equivariant \(K\)-theory of a point may be interpreted in terms of twisted
representations. For the Clifford twisting \(\tau(V)\) considered above, this
interpretation can be made explicit using a canonical double cover of \(H\).

\begin{definition}
\label{def:pin-double-cover}
Suppose first that \(V\) is even-dimensional. Let
\[
q\colon \operatorname{Pin}^{-}(V)\longrightarrow O(V)
\]
denote the canonical double covering associated with the convention
\[
v^2=-\lVert v\rVert^2
\]
for the real Clifford algebra of \(V\). The
\emph{\(\operatorname{Pin}^{-}\)-double cover of \(H\) associated with
\(\rho\)} is the fiber product
\[
\widetilde H
\coloneqq
H\times_{O(V)}\operatorname{Pin}^{-}(V)
=
\left\{
(h,s)\in H\times\operatorname{Pin}^{-}(V)
\;\middle|\;
\rho(h)=q(s)
\right\}.
\]

If \(\rho\) is orientation preserving, so that
\[
\rho(H)\subseteq SO(V),
\]
then
\[
\widetilde H
=
H\times_{SO(V)}\operatorname{Spin}(V),
\]
and we call \(\widetilde H\) the \emph{Spin double cover of \(H\)}
associated with \(V\).

If \(V\) is odd-dimensional, we apply the preceding construction to
\(V\oplus\mathbb{R}\) and the representation
\[
\rho\oplus 1\colon H\longrightarrow O(V\oplus\mathbb{R}).
\]
\end{definition}

The group \(\widetilde H\) fits into a central extension
\[
1\longrightarrow \{\pm1\}
\longrightarrow \widetilde H
\overset{p}{\longrightarrow} H
\longrightarrow 1,
\]
where \(p(h,s)=h\). We denote the nontrivial element of the kernel by
\[
\epsilon\coloneqq(e,-1)\in\widetilde H.
\]

A finite-dimensional complex representation
\[
\pi\colon \widetilde H\longrightarrow U(W)
\]
is said to be \emph{genuine} if
\[
\pi(\epsilon)=-\operatorname{id}_W.
\]
We denote by
\[
R^{-}(\widetilde H)
\]
the Grothendieck group of finite-dimensional genuine complex representations
of \(\widetilde H\). Equivalently, \(R^{-}(\widetilde H)\) is the free
abelian group generated by the isomorphism classes of irreducible genuine
representations.

The group \(R^{-}(\widetilde H)\) carries a natural \(R(H)\)-module
structure. Indeed, if \(U\) is a finite-dimensional complex
\(H\)-representation and \(W\) is a genuine
\(\widetilde H\)-representation, then
\[
p^*U\otimes W
\]
is again genuine.

Combining the twisted equivariant Thom isomorphism with
\cite[Corollary~3.6]{echterhoff2009equivariant}, we obtain the following
description.

\begin{theorem}
\label{thm:twisted-k-theory-genuine-representations}
Let \(H\) be a compact Lie group, and let
\[
\rho\colon H\longrightarrow SO(V)
\]
be an orientation-preserving orthogonal representation on a finite-dimensional
real vector space \(V\). Let \(\widetilde H\) be the associated Spin double
cover. Then there is a natural isomorphism of \(R(H)\)-modules
\[
K_H^{\tau(V)}(\mathrm{pt})
\cong
\begin{cases}
R^{-}(\widetilde H),
    & \dim V \text{ is even},\\[4pt]
0,
    & \dim V \text{ is odd}.
\end{cases}
\]
\end{theorem}

Consequently, if the action of \(H\) on \(V\) is orientation preserving, then
the twisted Thom isomorphism and
Theorem~\ref{thm:twisted-k-theory-genuine-representations} yield
\[
K_H^{0}(V)
\cong
\begin{cases}
R^{-}(\widetilde H),
    & \dim V \text{ is even},\\[4pt]
0,
    & \dim V \text{ is odd}.
\end{cases}
\]
When the action is not orientation preserving, the corresponding description
is considerably more involved. In the next section, we restrict our attention
to the orientation-preserving case and give a more explicit description of the
classes \(\kappa_i\) appearing in Theorem~\ref{thm main}.

\section{The Equivariant Poincar\'e-Hopf theorem at  \(K\)-homology level}\label{subsec preimage computation}

In this section, we complete the proof of
Theorem~\ref{thm main} by establishing part~\textup{(3)}.
More precisely, we determine the local class associated with a
\(G\)-orientable component of the zero set. In this case, the isotropy
action on the corresponding normal fiber preserves orientation, although
it need not lift to a spin action.

We prove the following result.

\begin{theorem}\label{thm last}
Let \(Z_i=G/H_i\) be a \(G\)-orientable component of the zero set of
\(A\), and let \( V_i=(E_i)_{eH_i} \) be the normal fiber of \(Z_i\) at \(eH_i\). Then the class
\(\kappa_i\in R^{\tau_i}(H_i)\) appearing in
Theorem~\ref{thm main}\textup{(2)} can be chosen to be
\[
\kappa_i=
\begin{cases}
0,
& \dim V_i \text{ is odd},\\[4pt]
\deg\bigl(A|_{Z_i}\bigr)
\bigl([s_i^+]-[s_i^-]\bigr),
& \dim V_i \text{ is even},
\end{cases}
\]
where \(s_i^\pm\) are the half-spin representations of the double cover
\(\widetilde H_i\) introduced in
Definition~\ref{def:pin-double-cover}, and
\[
\deg\bigl(A|_{Z_i}\bigr) \in R^{\tau_{i}}(H_i)
\] 
will be defined below.
\end{theorem}

\begin{remark}
Since \(X\) is \(G\)-equivariantly spin, \(TX\) is
\(G\)-equivariantly oriented. Moreover, along \(Z_i\) we have the
\(G\)-equivariant decomposition
\[
TX|_{Z_i}\cong TZ_i\oplus E_i.
\]
Thus, if \(Z_i=G/H_i\) admits a \(G\)-equivariant orientation, then
the normal bundle \(E_i\) is also \(G\)-equivariantly oriented.
Consequently, the isotropy action of \(H_i\) on
\( V_i=(E_i)_{eH_i} \)
is orientation preserving.
\end{remark}

We begin by recalling the double cover
\(\widetilde H_i\to H_i\) introduced in
Definition~\ref{def:pin-double-cover} and establishing several properties
of its representations that will be used in the proof of
Theorem~\ref{thm last}.

Since
\[
K_{H_i}^0(V_i)=0
\]
when \(\dim V_i\) is odd, it remains only to consider the case in which
\(\dim V_i\) is even.

Let
\[
\rho_i\colon H_i\longrightarrow \mathrm{SO}(V_i)
\]
denote the isotropy representation on \(V_i\). The induced action of
\(\widetilde H_i\) on \(V_i\) is given by
\[
(h,s)\longmapsto \rho_i(h).
\]
By the definition of \(\widetilde H_i\), the homomorphism
\[
\widetilde{\rho}_i\colon \widetilde H_i\longrightarrow
\mathrm{Spin}(V_i),
\qquad
(h,s)\longmapsto s,
\]
lifts this orthogonal representation. Consequently, \(V_i\) carries a
natural \(\widetilde H_i\)-equivariant spin structure.

Let
\(i:\{0\}\longrightarrow V_i\)
denote the inclusion of the origin, and let
\(
\pi:V_i\longrightarrow\{0\}
\)
be the projection. Denote by \(\pi^*S_{V_i}^{\pm},\) the pullbacks to \(V_i\)
of the two half-spin representations. Define
\[
\mu:\pi^{*}S_{V_i}^{+}\longrightarrow\pi^{*}S_{V_i}^{-}
\]
fiberwise by
\[
\mu_v=c(v),\qquad v\in V_i,
\]
where \(c(v)\) denotes Clifford multiplication by \(v\).

Since
\(
c(v)^2=-\|v\|^2,
\)
the map \(\mu_v\) is invertible for \(v\neq 0\). Hence
\(
\widetilde{\beta}_{\widetilde H_i}
=
[\pi^{*}S_{V_i}^{+},\pi^{*}S_{V_i}^{-},\mu]
\in K_{\widetilde H_i}^{0}(V_i)
\)
is the \(\widetilde H_i\)-equivariant Bott class.Consequently:

\begin{proposition}
\label{thom}
The Thom isomorphism is realized by:
\begin{align}
\otimes\,\tilde{\beta}_{\tilde{H}_i} \cong i_{!} : \mathrm{R}(\tilde{H}_i) &\to K^{0}_{\tilde{H}_i}(V_{i}) \\
 a &\mapsto a \cdot \tilde{\beta}_{\tilde{H}_i}
\end{align}
where $\tilde{\beta}_{\tilde{H}_i} = [S_{V_{i}}^{+}, S_{V_{i}}^{-}, \mu] \in K_{\tilde{H}_i}^{0}(V_{i})$ denotes the $\tilde{H}_i$-equivariant Bott element.
\end{proposition}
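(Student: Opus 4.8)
The plan is to reduce the claimed statement to the classical equivariant Thom isomorphism in $K$-theory, applied to the group $\tilde H_i$ rather than $H_i$. The key observation, already recorded in the paragraph preceding the proposition, is that the representation $(h,s)\mapsto \rho_i(h)$ of $\tilde H_i$ on $V_i$ factors through $\mathrm{SO}(n_i)$ but lifts canonically to $\mathrm{Spin}(n_i)$ via the second coordinate: indeed, $(h,s)\mapsto s$ is a homomorphism $\tilde H_i\to\mathrm{Spin}(n_i)$ covering $\rho_i$. Hence $V_i$, viewed as an $\tilde H_i$-equivariant vector bundle over a point, carries a genuine $\tilde H_i$-equivariant spin structure, and the spinor bundles $S_{V_i}^{\pm}$ together with Clifford multiplication $\mu$ are $\tilde H_i$-equivariant. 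This is precisely the input needed to invoke the equivariant Thom isomorphism for a compact group acting on a spin vector bundle.

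First I would make the spin structure explicit: write $S_{V_i}=S_{V_i}^+\oplus S_{V_i}^-$ for the half-spinor representations of $\mathrm{Spin}(n_i)$ (using that $n_i$ is even so $\mathbb{C}\mathrm{liff}(V_i)$ has a $\mathbb{Z}_2$-graded irreducible module), pull them back along $\tilde H_i\to\mathrm{Spin}(n_i)$ to get $\tilde H_i$-representations, and form the Bott class $\tilde\beta_{\tilde H_i}=[S_{V_i}^+,S_{V_i}^-,\mu]\in K^0_{\tilde H_i}(V_i)$ exactly as in the general recipe of Subsection \ref{sub k homo class} (this is the class denoted $[v]$ there, with $v$ the radial Clifford section). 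Next I would recall the statement of the equivariant Thom isomorphism: for a compact Lie group $K$ acting on a $K$-equivariant spin vector bundle $W\to Y$, the map $K^0_K(Y)\to K^0_K(W)$, $a\mapsto \pi^*a\cdot\beta_W$, is an isomorphism with inverse given by the Thom/Gysin map $i_!$ in the reverse direction (or, equivalently, by the index of the fibrewise Dirac operator). Specializing to $Y=\mathrm{pt}$, $K=\tilde H_i$, $W=V_i$, and noting $K^0_{\tilde H_i}(\mathrm{pt})=\mathrm{R}(\tilde H_i)$, this gives exactly the asserted isomorphism $\mathrm{R}(\tilde H_i)\to K^0_{\tilde H_i}(V_i)$, $a\mapsto a\cdot\tilde\beta_{\tilde H_i}$, and the identification with $i_!$ is the standard naturality of the Thom class. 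One should cite the equivariant Bott periodicity established for compact groups in Section \ref{eq kk and bott} (or the relevant statement from the literature, e.g.\ Segal's equivariant $K$-theory) for the case $W=V_i$ a representation; this is literally equivariant Bott periodicity for the compact group $\tilde H_i$.

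The only genuine content beyond citing Thom isomorphism is verifying that all the structures are $\tilde H_i$-equivariant and compatible, which I would spell out in one short paragraph: the $\tilde H_i$-action on $V_i$ through $\rho_i\circ\mathrm{pr}_1$ equals the action induced by $\mathrm{pr}_2$ composed with $\mathrm{Spin}(n_i)\to\mathrm{SO}(n_i)$, so the $\mathrm{Spin}(n_i)$-equivariant Clifford module structure on $S_{V_i}$ is carried to an $\tilde H_i$-equivariant one, and the radial symbol $v\mapsto\mu(v)$ is $\tilde H_i$-equivariant because $\mu$ intertwines the $\mathrm{Spin}(n_i)$-actions. I expect the main (minor) obstacle to be purely bookkeeping: confirming that $\tilde H_i$ is a genuine compact Lie group (it is a closed subgroup of the compact group $H_i\times\mathrm{Spin}(n_i)$, hence compact) and that the fibre product definition does give a double cover of $H_i$ (the kernel of $\mathrm{pr}_1:\tilde H_i\to H_i$ is $\{(e,s):s\in\ker(\mathrm{Spin}(n_i)\to\mathrm{SO}(n_i))\}\cong\mathbb{Z}_2$, and $\mathrm{pr}_1$ is surjective since $\rho_i$ lands in $\mathrm{SO}(n_i)=\mathrm{Spin}(n_i)/\mathbb{Z}_2$), so that $\mathrm{R}(\tilde H_i)$ is the correct target of the construction. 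No hard analysis is needed; the proposition is essentially the equivariant Thom isomorphism transported across the covering $\tilde H_i\to H_i$.
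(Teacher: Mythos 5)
Your proposal is correct and matches the paper's approach: the paper states Proposition \ref{thom} without proof, as an immediate consequence of the observation that the projection $(h,s)\mapsto s$ gives a lift of $\rho_i$ to $\mathrm{Spin}(n_i)$ and hence an $\tilde H_i$-equivariant spin structure on $V_i$, after which one invokes the classical equivariant Thom/Bott isomorphism (Atiyah, Segal) for a compact group acting on a spin representation; your write-up simply supplies the bookkeeping the paper leaves implicit. One small caution: you suggest citing ``equivariant Bott periodicity established for compact groups in Section \ref{eq kk and bott}'' as a possible reference, but that is the very section in which this proposition lives, so that branch of the citation would be circular --- the external reference (Segal/Atiyah, or \cite{ConnesMoscovici} for the double-cover device) is the correct one.
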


Let $(1, \epsilon)$ be the central element of $\tilde{H}_i$. This induces a decomposition  
$K_{\tilde{H}_i}^{0}(V_{i}) = K_{\tilde{H}_i}^{0}(V_{i})^{+} \oplus K_{\tilde{H}_i}^{0}(V_{i})^{-}$,  
where the superscript $\pm$ indicates the eigenspaces of the $(1,\epsilon)$-action ($\pm I$ respectively) \label{negitive}. The map  
\[i_{!}: \mathrm{R}(\tilde{H}_i)^{-} \to K_{\tilde{H}_i}^{0}(V_{i})^{+} \cong K_{H_i}^{0}(V_{i})\]
is an isomorphism. Combining this with Proposition \ref{thom} yields:

\begin{theorem}\label{h}
The following map is an isomorphism:
\begin{align}
\otimes\,\tilde{\beta}_{\tilde{H}_i} \cong i_{!} : \mathrm{R}(\tilde{H}_i)^* &\to K^{0}_{H_i}(V_{i}) \\
 a &\mapsto a \cdot \tilde{\beta}_{\tilde{H}_i}
\end{align}
where $\tilde{\beta}_{\tilde{H}_i}$ is the Bott element defined above, and  
\[
\mathrm{R}(\tilde{H}_i)^{*} = \begin{cases}
\mathrm{R}(\tilde{H}_i)^{+} & \text{if } V_{i} \text{ admits an equivariant spin structure}, \\
\mathrm{R}(\tilde{H}_i)^{-} & \text{otherwise}.
\end{cases}
\]
\end{theorem}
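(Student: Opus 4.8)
The plan is to deduce Theorem~\ref{h} from Proposition~\ref{thom} by a careful analysis of how the central involution $(1,\epsilon)\in\tilde H_i$ acts on both sides of the Thom isomorphism. First I would observe that $(1,\epsilon)$ is central in $\tilde H_i$, so the action of $\tilde H_i$ on any $\tilde H_i$-$C^*$-algebra (in particular on $C_0(V_i)$ and on $\mathbb C$) commutes with the generator of a $\mathbb Z/2$; hence both $\mathrm R(\tilde H_i)=K^0_{\tilde H_i}(\mathrm{pt})$ and $K^0_{\tilde H_i}(V_i)$ are modules over $\mathrm R(\mathbb Z/2)=\mathbb Z[\epsilon]/(\epsilon^2-1)$, and after inverting $2$ (or just decomposing the underlying complex Hilbert modules into $\pm1$ eigenspaces of the involution, which is legitimate since everything is $\mathbb C$-linear) each group splits as a direct sum of its $(+)$ and $(-)$ parts. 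The isomorphism $i_!$ of Proposition~\ref{thom} is a module map over $\mathrm R(\tilde H_i)$, hence in particular over $\mathrm R(\mathbb Z/2)$, so it respects this splitting and restricts to isomorphisms $\mathrm R(\tilde H_i)^{\pm}\xrightarrow{\ \cong\ }K^0_{\tilde H_i}(V_i)^{\pm}$.

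The second step is to identify which eigenspace on the target recovers the genuine $H_i$-equivariant $K$-theory. The point is that $(1,\epsilon)$ acts on the natural representation $V_i$ trivially (via $\rho_i(1)=\mathrm{id}$), so $(1,\epsilon)$ acts trivially on $V_i$ as a space, but it acts on the spinor bundle $S_{V_i}=S_{V_i}^+\oplus S_{V_i}^-$ by $-\mathrm{id}$, since $\epsilon\in\mathrm{Spin}(n_i)$ is the nontrivial element of the kernel of $\mathrm{Spin}(n_i)\to\mathrm{SO}(n_i)$ and acts as $-1$ on the spin representation. I would spell out the standard fact that a $\tilde H_i$-equivariant vector bundle on a space with trivial $(1,\epsilon)$-action on which $(1,\epsilon)$ acts fibrewise by $\pm\mathrm{id}$ is exactly the same datum as an $H_i$-equivariant bundle twisted by the corresponding character; in the $(+)$-eigenspace the $\tilde H_i$-action descends to an $H_i$-action, giving the canonical identification $K^0_{\tilde H_i}(V_i)^+\cong K^0_{H_i}(V_i)$ used in the statement. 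When $V_i$ itself carries an $H_i$-equivariant spin structure, the spinor bundle $S_{V_i}$ is already $H_i$-equivariant, so the Bott class $\tilde\beta_{\tilde H_i}$ lives in the $(+)$-eigenspace and $i_!$ maps $\mathrm R(\tilde H_i)^+$ onto $K^0_{H_i}(V_i)$; when there is no such structure, the obstruction is precisely that $\epsilon$ acts by $-1$ on the fibres of $S_{V_i}$, so $\tilde\beta_{\tilde H_i}\in K^0_{\tilde H_i}(V_i)^-$ and, because $i_!$ is multiplication by this class and is $\mathrm R(\tilde H_i)$-linear, it sends $\mathrm R(\tilde H_i)^-$ isomorphically onto $K^0_{\tilde H_i}(V_i)^+\cong K^0_{H_i}(V_i)$ (an element of $\mathrm R(\tilde H_i)^-$ times a class in the $(-)$-eigenspace lands in the $(+)$-eigenspace). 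This is the case distinction recorded in the definition of $\mathrm R(\tilde H_i)^*$.

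Finally I would assemble these pieces: Proposition~\ref{thom} gives the global isomorphism $\mathrm R(\tilde H_i)\xrightarrow{\cdot\,\tilde\beta_{\tilde H_i}}K^0_{\tilde H_i}(V_i)$, the $\mathrm R(\mathbb Z/2)$-linearity splits it into $(\pm)$-pieces, and Step~2 picks out the piece landing in $K^0_{H_i}(V_i)$, namely $\mathrm R(\tilde H_i)^*\to K^0_{H_i}(V_i)$, which is therefore an isomorphism. The parity hypothesis ($\dim V_i=n_i$ even) is what guarantees that the Bott class sits in degree $0$ and that $\mathrm{Spin}(n_i)$ has a genuine (ungraded) spinor representation so that the whole discussion takes place in $K^0$ rather than $K^1$. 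The main obstacle I anticipate is not the algebra of eigenspace decompositions, which is routine, but making precise and correctly oriented the claim that $\epsilon$ acts by $-1$ on $S_{V_i}$ and hence that $\tilde\beta_{\tilde H_i}$ lies in the $(-)$-eigenspace exactly when $V_i$ fails to be $H_i$-equivariantly spin — i.e.\ verifying that the obstruction to lifting $\rho_i$ to $\mathrm{Spin}(n_i)$ is detected by the $(1,\epsilon)$-action on the Bott element. I would handle this by comparing, over a point, the two possible lifts of $H_i$ into $\tilde H_i$ (one exists as a splitting iff $V_i$ is $H_i$-spin) and tracking the induced action on spinors.
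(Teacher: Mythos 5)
Your argument is correct and is essentially the paper's own (very terse) proof: decompose $\mathrm{R}(\tilde H_i)$ and $K^0_{\tilde H_i}(V_i)$ into eigenspaces of the central element $(1,\epsilon)$, use $\mathrm{R}(\tilde H_i)$-linearity of $i_!$ from Proposition~\ref{thom}, identify $K^0_{\tilde H_i}(V_i)^+\cong K^0_{H_i}(V_i)$ because $(1,\epsilon)$ acts trivially on the base, and locate $\tilde\beta_{\tilde H_i}$ in the $(+)$ or $(-)$ eigenspace according to whether $\rho_i$ lifts to $\mathrm{Spin}(n_i)$. The one blemish is that your first paragraph claims $i_!$ restricts to maps $\mathrm{R}(\tilde H_i)^{\pm}\to K^0_{\tilde H_i}(V_i)^{\pm}$, which contradicts your own (correct) second-paragraph observation that multiplication by a Bott class lying in the $(-)$-eigenspace interchanges the two eigenspaces; it is precisely that second-paragraph case analysis that the theorem encodes in the definition of $\mathrm{R}(\tilde H_i)^{*}$.
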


Then, to prove the Theorem \ref{thm last}, one needs to compute the preimage of $[{A}_{E_i}]$ under $\otimes\tilde{\beta}_{\tilde{H_i}}.$

 Let $S_{Z_i}$ be the spinor bundle on $ Z_i.$ Recall that $D^{Z_i}$ is the Dirac-type operator obtained by twisting  $\D^{Z_i}$ with $S_{Z_i}.$  According to the definition, $Z_i$ possesses a G-invariant tubular neighborhood $(h, E_i)$. In this case,
 $E_i$ splits into two orthogonal G-subbundles $E_i=E_i^{+}\oplus E_i^{-}$. Moreover, $h: E_i\rightarrow X$ is an equivariant embedding and there is an open G-invariant neighborhood $U_i$ of $Z_i$ in $E_i$
such that for any $e=(e_{+},e_{-})\in E_i\cap U_i,$ we have $f(h(e))=c-\mid e_{-}\mid ^{2}+\mid e_{+}\mid^{2},$ where $f$ is the $G$-invariant Morse-Bott function, and $c$ denotes the value of the  constant function $f|_{Z_i}$. 

Let \(r=\operatorname{rank}(E_i^-)\), and let
\(\operatorname{Fr}_{O}(E_i^-)\) denote the orthonormal frame bundle of
\(E_i^-\). We define the orientation line bundle of \(E_i^-\) by
\[
O(E_i^-)
:=
\operatorname{Fr}_{O}(E_i^-)\times_{\det}\mathbb{R},
\]
where
\[
\det:O(r)\longrightarrow\{\pm1\}
\]
acts on \(\mathbb{R}\) by scalar multiplication. Thus,
\(O(E_i^-)\) is a one-dimensional real line bundle over \(Z_i\).
Since \(E_i^-\) is \(G\)-equivariant, \(O(E_i^-)\) naturally inherits
a \(G\)-equivariant structure.

The following proposition explains the relation between $[{A}_{E_i}|_{\mathbb{R}^{n}}]\in K_{H_{i}}^{0}(\mathbb{R}^{n_{i}})$ and 
\[[S^{+}(T^{*}(E_i))|_{\mathbb{R}^{n_i}},S^{-}(T^{*}(E_i))|_{\mathbb{R}^{n_i}},c(v)|_{\mathbb{R}^{n_i}}]\in K_{H_{i}}^{0}(\mathbb{R}^{n_i}).\]  
\begin{proposition}\label{sg}
    There exists $\mathrm{deg}(A)|_{Z_{i}}\in \mathrm{R}(H_{i})\subset R^{\tau_{i}}(H_{i})$ such that:
    $$[{A}_{E_i}|_{\mathbb{R}^{n_{i}}}]=(\mathrm{deg}(A)|_{Z_{i}})\cdot[S^{+}(T^{*}(E_i))|_{\mathbb{R}^{n_{i}}},S^{-}(T^{*}(E_i))|_{\mathbb{R}^{n_{i}}},c(v)|_{\mathbb{R}^{n_{i}}}].$$ 
\end{proposition}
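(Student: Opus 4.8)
The plan is to compute both sides after restricting everything to the fiber $\mathbb{R}^{n_i}$ of $E_i$ over the base point $z_0$ of $Z_i=G/H_i$, carrying the residual isotropy action $\rho_i$ of the compact group $H_i$; since the induction isomorphism $K^0_{H_i}(\mathbb{R}^{n_i})\cong K^0_G(E_i)$ of Subsection \ref{subsec thom and poincare} is compatible with restriction, it suffices to match the two classes inside $K^0_{H_i}(\mathbb{R}^{n_i})$. The first input is the Morse--Bott normal form $f(h(e_+,e_-))=c-\|e_-\|^2+\|e_+\|^2$: for a $G$-invariant connection adapted to the splitting $E_i=E_i^+\oplus E_i^-$ it forces $A=df$, restricted to the fiber, to be (up to a $G$-equivariant homotopy and a positive scalar) the vertical vector field $v(e_+,e_-)=(e_+,-e_-)$, which vanishes only at the origin. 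Consequently the target class and the operator $\hat{A}_{E_i}$ involve the Clifford action of the \emph{same} vertical symbol, and differ only in the coefficient module: the spinor bundle $S(T^*E_i)$ with its geometric Clifford action for the target, versus $\mathrm{Cliff}(T^*E_i)$ with the right Clifford multiplication (twisted by the grading) for $\hat{A}_{E_i}$.

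I would then use that, $E_i$ being $G$-spin and $n_i$ even, $\mathrm{Cliff}(T^*E_i)\cong\mathcal{M}\,\hat{\otimes}\,S(T^*E_i)$ as $\mathbb{Z}_2$-graded $H_i$-equivariant Clifford modules for the right action, where the multiplicity bundle $\mathcal{M}$ is a copy of (the dual of) $S(T^*E_i)$ and is independent of $A$. Restricting $\mathcal{M}$ to $\mathbb{R}^{n_i}$ and using the retraction of $\mathbb{R}^{n_i}$ to the origin turns $\mathcal{M}$ into a $\mathbb{Z}_2$-graded $H_i$-representation; this is where $E_i$ being $G$-spin is essential, since then $T^*E_i|_{z_0}$ is an $H_i$-equivariant spin vector space and both half-spinor spaces are honest $H_i$-representations. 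I would \emph{define} $\mathrm{deg}(\Xi)|_{Z_i}:=[\mathcal{M}^+]-[\mathcal{M}^-]\in\mathrm{R}(H_i)$. The asserted equality is then the multiplicativity of the construction $v\mapsto[\pi^*E^+,\pi^*E^-,\hat{c}(v)]$ of Subsection \ref{sub k homo class} under tensoring the coefficient module with $\mathcal{M}$ --- namely $[v]_{E_0\hat{\otimes}\mathcal{M}}=([\mathcal{M}^+]-[\mathcal{M}^-])\cdot[v]_{E_0}$ with $E_0=S(T^*E_i)|_{\mathbb{R}^{n_i}}$ --- together with the elementary fact that the right Clifford action on $\mathcal{M}^\vee$ (equivalently, on a copy of $S(T^*E_i)$) yields the same Bott symbol as the geometric Clifford action, up to the identifications already made. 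Since $v$ differs from the standard radial field by the fiberwise reflection $\mathrm{id}_{E_i^+}\oplus(-\mathrm{id}_{E_i^-})$, one obtains in addition the more explicit description of $\mathrm{deg}(\Xi)|_{Z_i}$ in terms of the orientation data $O(E_i^-)$ and $\det((E_i^-)^*)$ introduced just above.

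The step I expect to be the main obstacle is the $\mathbb{Z}_2$-grading and double-cover bookkeeping: one must track signs carefully when transporting the grading between the two tensor factors (the ``twisted by grading'' defining $\hat{A}_{E_i}$), when using the relation $[\pi^*E^-,\pi^*E^+,\hat{c}(v)]=-[\pi^*E^+,\pi^*E^-,\hat{c}(v)]$ in $K$-theory, and when passing between $\tilde{H}_i$- and $H_i$-representations, so as to confirm that $\mathrm{deg}(\Xi)|_{Z_i}$ really lands in $\mathrm{R}(H_i)$ and that the identity holds exactly with respect to the $\mathrm{R}(H_i)$-module structure on $K^0_{H_i}(\mathbb{R}^{n_i})$. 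A secondary and more routine point is to verify that the splitting $\mathrm{Cliff}(T^*E_i)\cong\mathcal{M}\,\hat{\otimes}\,S(T^*E_i)$, the normal-form comparison of $A$ with $v$, and the retraction to the origin can all be carried out $H_i$-equivariantly once a $G$-invariant metric and connection adapted to $E_i=E_i^+\oplus E_i^-$ are fixed.
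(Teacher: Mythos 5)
Your central mechanism does not match what the proposition is actually comparing, and as set up it would produce the wrong element. The class $[\hat{A}_{E_i}]$ is \emph{not} the symbol class of $A$ acting on $\mathrm{Cliff}(T^*E_i)$ by right multiplication: the paper defines $\hat{A}_{E_i}$ as the Clifford representation of $A_{E_i}$ on the spinor bundle $S(T^*(E_i))$ itself. This is precisely the point of the factorization $[D^{E_i}+\lambda A_{E_i}]=[\D^{E_i}]\otimes[\hat{A}_{E_i}]$ through $\mathrm{Cliff}(T^*E_i)\cong S\hat{\otimes}S$: the Dirac factor already accounts for one copy of $S$, and $\hat{A}_{E_i}$ lives on the other. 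Hence both sides of the asserted identity carry the \emph{same} coefficient bundle $S(T^*E_i)|_{\mathbb{R}^{n_i}}$ and differ only in the symbol, $\hat{c}(A)$ versus the radial (Bott) symbol $\hat{c}(v)$. Your multiplicity bundle $\mathcal{M}$ with $\mathrm{Cliff}(T^*E_i)\cong\mathcal{M}\hat{\otimes}S(T^*E_i)$ therefore never enters this proposition; defining $\mathrm{deg}(\Xi)|_{Z_i}:=[\mathcal{M}^+]-[\mathcal{M}^-]$ would both double-count the spinor multiplicity (which is extracted only later, in Theorem \ref{main}, as the factor $[\pi^{*}(S^+(T^{*}_{[e]}(G/H_{i})))]-[\pi^{*}(S^-(T^{*}_{[e]}(G/H_{i})))]$) and omit the contribution that $\mathrm{deg}(\Xi)|_{Z_i}$ is actually meant to record.

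That contribution is the one you mention only in passing at the end: by the Morse--Bott normal form, $\hat{c}(A)|_{\mathbb{R}^{n_i}}$ is (up to homotopy) the pullback of the radial symbol under the fiberwise reflection $\mathrm{id}_{E_i^+}\oplus(-\mathrm{id}_{E_i^-})$. The paper's proof consists of implementing this reflection as an equivariant bundle automorphism --- twisting by the orientation line $O(E_i^{-})$ when $\dim E_i^{-}$ is odd, where $-\mathrm{id}_{E_i^-}$ is orientation-reversing --- and showing that its pullback on $K^{0}_{H_i}(\mathbb{R}^{n_i})$ is an $\mathrm{R}(\tilde{H_i})$-module automorphism preserving the grading, hence multiplication by an involutive element of $\mathrm{R}(H_i)\cong\mathrm{R}(\tilde{H_i})^{+}$; that element is $\mathrm{deg}(\Xi)|_{Z_i}$. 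This is exactly the ``grading and double-cover bookkeeping'' you flag as the main obstacle but do not carry out, so the actual content of the proposition is missing from your argument.
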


\begin{proof}

First, we consider the case where $\mathrm{dim}(E_i^{-})$ is odd. In this case, it is clear that $O(E_i^{-}\otimes O(E_i^{-}))$ is a trivial bundle since $E_i^{-}\otimes O(E_i^{-})$ is orientable. Thus, the following bundle isomorphism
    \begin{align*}
        F:\Gamma(O(E_i^{-}\otimes O(E_i^{-})))&\rightarrow \Gamma(O(E_i^{-}\otimes O(E_i^{-})))\\
        & s \mapsto (-1)^{\mathrm{dim}(E_i^{-})}s
        \end{align*}
        induces another equivariant bundle isomorphism
        \begin{align*}
            \phi_{i}: E_i^{-}\otimes O(E_i^{-})&\rightarrow E_i^{-}\otimes O(E_i^{-})\\
            v_{z}&\mapsto  -v_{z}.
        \end{align*}
         
         Therefore, there exists an equivariant isomorphism $\hat{\phi_{i}}=\mathrm{Id}_{E_i^{+}\otimes O(E_i^{-})}\oplus\phi_{i}:E_i\otimes O(E_i^{-})\rightarrow E_i\otimes O(E_i^{-})$. Set $\sigma\in \Gamma(O(E_i^{-}))$ and $\sigma^{*}\in \Gamma(O(E_i^{-})^{*}))$ such that $\sigma\otimes \sigma^{*}\in \Gamma(O(E_i^{-})\otimes O(E_i^{-})^{*})$ is a non-vanishing section. Hence
         \[ E_i\cong E_i\otimes O(E_i^{-})\otimes(O(E_i^{-}))^{*}.\]
         Thus, we have an equivariant bundle isomorphism between $E_i$ factored through:
         \begin{align*}
             \hat{\phi_{i}}\otimes\mathrm{Id}:  E_i\otimes O(E_i^{-})\otimes(O(E_i^{-}))^{*}&\rightarrow  E_i\otimes O(E_i^{-})\otimes(O(E_i^{-}))^{*}\\
             v_{z}\otimes \sigma\otimes\sigma^{*}&\mapsto v^+_z\otimes \sigma\otimes\sigma^{*}+(-v^-_{z}\otimes \sigma)\otimes\sigma^{*}.
             \end{align*}

        Therefore, we know that $(\hat{\phi_{i}}\otimes\mathrm{Id})|_{\mathbb{R}^{n_i}}$ induces an $H_{i}$-equivariant isomorphism from $\mathbb{R}^{n}$ to itself.  This equivariant isomorphism induces an $\mathrm{R}(H_{i})$-module isomorphism at the level of K-theory:$$((\hat{\phi_{i}}\otimes\mathrm{Id})|_{\mathbb{R}^{n_i}})^*:K_{H_{i}}^{0}(\mathbb{R}^{n_i})\rightarrow K_{H_{i}}^{0}(\mathbb{R}^{n_i}).$$  

        Since the orthogonal decomposition
\[
E_i=E_i^{+}\oplus E_i^{-}
\]
is \(G\)-invariant, it remains \(H_i\)-invariant after restricting the
\(G\)-action to \(H_i\). Hence,
\[
(\hat{\phi_i}\otimes\mathrm{Id})|_{\mathbb{R}^{n_i}}
\]
is \(H_i\)-equivariant and, under the above decomposition, acts on the
fiber \(\mathbb{R}^{n_i}\) by
\[
(v^{+},v^{-})\longmapsto(v^{+},-v^{-}).
\]
Moreover, the pullback induces grading-preserving \(H_i\)-equivariant
bundle isomorphisms
\[
\left(
(\hat{\phi_i}\otimes\mathrm{Id})|_{\mathbb{R}^{n_i}}
\right)^{*}
\left(
S^{\pm}(T^{*}(E_i))|_{\mathbb{R}^{n_i}}
\right)
\cong
S^{\pm}(T^{*}(E_i))|_{\mathbb{R}^{n_i}}.
\]
Through these equivariant bundle isomorphisms, the pulled-back symbol is
computed as follows. For \(v=(v^{+},v^{-})\), we have
\[
\begin{aligned}
&\left(
(\hat{\phi_i}\otimes\mathrm{Id})|_{\mathbb{R}^{n_i}}
\right)^{*}(A_{E_i})(v^{+},v^{-})
\\
&\qquad =
A_{E_i}(v^{+},-v^{-})
\\
&\qquad =
c(v^{+})-c(-v^{-})
\\
&\qquad =
c(v^{+})+c(v^{-})
\\
&\qquad =
c(v).
\end{aligned}
\]
Here we have used the linearity of Clifford multiplication,
\[
c(-v^{-})=-c(v^{-}).
\]
Therefore,
\[
\begin{aligned}
&\left(
(\hat{\phi_i}\otimes\mathrm{Id})|_{\mathbb{R}^{n_i}}
\right)^{*}
\left([A_{E_i}|_{\mathbb{R}^{n_i}}]\right)
\\
&\qquad =
\left[
S^{+}(T^{*}(E_i))|_{\mathbb{R}^{n_i}},
S^{-}(T^{*}(E_i))|_{\mathbb{R}^{n_i}},
c(v)|_{\mathbb{R}^{n_i}}
\right].
\end{aligned}
\]
Thus, flipping the negative subspace \(E_i^{-}\) transforms the
Clifford symbol \(c(v^{+})-c(v^{-})\) into the standard Clifford
symbol \(c(v^{+})+c(v^{-})=c(v)\).

        Since $$((\hat{\phi_{i}}\otimes\mathrm{Id})|_{\mathbb{R}^{n_i}})^*:K_{\tilde{H_{i}}}^{0}(\mathbb{R}^{n_i})\rightarrow K_{\tilde{H_{i}}}^{0}(\mathbb{R}^{n_i})$$
        is also an $\mathrm{R}(\tilde{H_{i}})$-module isomorphism and preserves the grading of
       \[K_{\tilde{H_{i}}}^{0}(\mathbb{R}^{n_i})=K_{\tilde{H_{i}}}^{0}(\mathbb{R}^{n_i})^{+}\oplus K_{\tilde{H_{i}}}^{0}(\mathbb{R}^{n_i})^{-},\]
       where $K_{\tilde{H_{i}}}^{0}(\mathbb{R}^{n_i})^{+}\cong K_{H_{i}}^{0}(\mathbb{R}^{n_i})$.
Through this bundle isomorphism, we conclude that there exists $\mathrm{deg}(A)|_{Z_{i}}\in \mathrm{R}(H_{i})\cong \mathrm{R}(\tilde{H_{i}})^+\subset R^{\tau_{i}}(H_{i})$  where \[\mathrm{deg}(A)|_{Z_{i}}\otimes \mathrm{deg}(A)|_{Z_{i}}=\mathrm{Id}_{H_{i}}^{\mathbb{C}}\in \mathrm{R}(H_{i}),\] 
here $\mathrm{Id}_{H_{i}}^{\mathbb{C}}$ is the one-dimensional complex trivial representation of $H_{i}$ such that for any $x\in K_{H_{i}}^{0}(\mathbb{R}^{n_i}),$ we have:
$$((\hat{\phi_{i}}\otimes\mathrm{Id})|_{\mathbb{R}^{n_i}})^*(x)=\mathrm{deg}(A)|_{Z_{i}}\cdot x,$$
here $A=d f$ is the $G$-invariant Morse–Bott differential 1-form over $X.$
Thus, the following two K-theory elements are equal:
         $$[A_{E_i}|_{\mathbb{R}^{n_{i}}}]=(\mathrm{deg}(A)|_{Z_{i}})\cdot[S^{+}(T^{*}(E_i))|_{\mathbb{R}^{n_{i}}},S^{-}(T^{*}(E_i))|_{\mathbb{R}^{n_{i}}},c(v)|_{\mathbb{R}^{n_{i}}}].$$ 
         
         Therefore, we have proved our claim in the odd-dimensional case.

        Then, we consider the case when $\mathrm{dim}({E_i}^{-})$ is even. In this case, there exists a natural bundle isomorphism: \begin{align*}
            \phi:  &E_i^{-}\rightarrow E_i^{-}\\
            &(x,v)\mapsto (x,-v)
        \end{align*}
   since $\mathrm{dim}(E_{i}^{-})$ is even. Similar to the odd-dimensional case, through the bundle isomorphism $\mathrm{Id}\oplus\phi: E_i\cong E_i^{+}\oplus E_i^{-}\rightarrow E_i\cong E_i^{+}\oplus E_i^{-}$, we identify the following two $K$-theory elements: 
        $$[A_{E_i}|_{\mathbb{R}^{n_{i}}}]=(\mathrm{deg}(A)|_{Z_{i}})\cdot[S^{+}(T^{*}(E_i))|_{\mathbb{R}^{n_{i}}},S^{-}(T^{*}(E_i))|_{\mathbb{R}^{n_{i}}},c(v)|_{\mathbb{R}^{n_{i}}}].$$

       Therefore, we complete the proof.   
\end{proof}

 \begin{theorem}
 \label{main}
     The class  $[A_{E_{i}}|_{\mathbb{R}^{n_i}}]\in K^{0}_{H_{i}}(\mathbb{R}^{n_{i}})$ equals 
    {\small $$(\mathrm{deg}(A)|_{Z_{i}})([\pi^{*}(S^+(T^{*}_{[e]}(G/H_{i})))]-[\pi^{*}(S^-(T^{*}_{[e]}(G/H_{i})))])\cdot[S^+(T^*(\mathbb{R}^{n_{i}})),S^-(T^*(\mathbb{R}^{n_{i}})),c(x)],$$ }
     where $\pi:\mathbb{R}^{n_{i}}\cong H_{i}\times_{H_{i}}\mathbb{R}^{n_{i}}\rightarrow \{0\}$, $S^{\pm}(T^{*}_{[e]}(G/H_{i}))$ is the spinor space of $T^{*}_{[e]}(G/H_{i})$ and $[S^+(T^*(\mathbb{R}^{n_{i}})),S^-(T^*(\mathbb{R}^{n_{i}})),c(x)]\in K^0_{\tilde{H_{i}}}(\mathbb{R}^{n_{i}})^{-}$ is the $\tilde{H_{i}}$-equivariant Bott element.
 \end{theorem}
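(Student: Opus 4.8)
\textbf{Proof proposal for Theorem \ref{main}.}

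The plan is to combine Proposition \ref{sg} with an explicit analysis of the Clifford module $S(T^*(E_i))$ along the zero section, reducing everything to the trivial bundle $\mathbb{R}^{n_i} \cong H_i \times_{H_i} \mathbb{R}^{n_i}$ over a point. By Proposition \ref{sg}, it suffices to compute the class
\[
[S^{+}(T^{*}(E_i))|_{\mathbb{R}^{n_{i}}},\,S^{-}(T^{*}(E_i))|_{\mathbb{R}^{n_{i}}},\,\hat{c}(v)|_{\mathbb{R}^{n_{i}}}] \in K_{\tilde H_i}^0(\mathbb{R}^{n_i})^-
\]
and then multiply by $\mathrm{deg}(\Xi)|_{Z_i}$. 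The central geometric input is the splitting of the cotangent bundle along the zero section: over $Z_i = G/H_i$, restricted to the fiber $\mathbb{R}^{n_i}$, one has $T^*(E_i)|_{\mathbb{R}^{n_i}} \cong \pi^*(T^*_{[e]}(G/H_i)) \oplus T^*(\mathbb{R}^{n_i})$ as $\tilde H_i$-equivariant bundles, where the first summand carries the isotropy representation of $H_i$ on the cotangent space of the orbit at the identity coset and the second is the tautological (Bott) direction. Since the spinor bundle of a direct sum of even-rank bundles is the graded tensor product of the spinor bundles, $S(T^*(E_i))|_{\mathbb{R}^{n_i}} \cong \pi^*(S(T^*_{[e]}(G/H_i))) \,\hat\otimes\, S(T^*(\mathbb{R}^{n_i}))$.

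The key steps, in order, are as follows. First I would make the tubular-neighborhood identification precise: using the $G$-invariant splitting $E_i = E_i^+ \oplus E_i^-$ and the $G$-spin structure on $E_i$ (established before equation \eqref{eq final product}), write out the $\tilde H_i$-equivariant decomposition of $T^*(E_i)$ restricted to the fiber over $[e] \in G/H_i$. Second, I would invoke the multiplicativity of spinor modules under direct sums to express $S^\pm(T^*(E_i))|_{\mathbb{R}^{n_i}}$ in terms of $S^\pm(T^*_{[e]}(G/H_i))$ and $S^\pm(T^*(\mathbb{R}^{n_i}))$; here the grading bookkeeping for the graded tensor product must be tracked carefully, since it governs which pieces land in the $+$ versus $-$ eigenspace of the central element $(1,\epsilon) \in \tilde H_i$. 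Third, I would identify the Clifford symbol: since $v$ restricted to the fiber is (up to homotopy through the normalization $\|v\|\geq 1$) the tautological section $x \mapsto x$ of $\mathbb{R}^{n_i}$, acting only on the $S(T^*(\mathbb{R}^{n_i}))$ factor, the triple factors as the exterior product of the constant bundle $\pi^*(S^\pm(T^*_{[e]}(G/H_i)))$ with the genuine Bott element $[S^+(T^*(\mathbb{R}^{n_i})), S^-(T^*(\mathbb{R}^{n_i})), \hat c(x)]$. This exterior product is exactly the $\mathrm{R}(\tilde H_i)$-module action of the virtual representation $[\pi^*(S^+(T^*_{[e]}(G/H_i)))] - [\pi^*(S^-(T^*_{[e]}(G/H_i)))]$ on the Bott class. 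Finally, multiplying through by $\mathrm{deg}(\Xi)|_{Z_i}$ from Proposition \ref{sg} yields the claimed formula, and one checks the result lands in $K^0_{\tilde H_i}(\mathbb{R}^{n_i})^-$ because the Bott element does and $\mathrm{deg}(\Xi)|_{Z_i}$ together with the orbit spinor factor sit in $\mathrm{R}(\tilde H_i)^+ \cong \mathrm{R}(H_i)$.

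The main obstacle I anticipate is the grading/sign bookkeeping in the graded tensor product of spinor modules and its interaction with the $\mu\colon \tilde H_i \to \mathbb{Z}_2$ twist: one must verify that the virtual class $[\pi^*(S^+)] - [\pi^*(S^-)]$ of the orbit-direction spinor space genuinely acts as an element of $\mathrm{R}(H_i) = \mathrm{R}(\tilde H_i)^+$ (so that the product with the Bott element in the $-$ part stays in the $-$ part), rather than picking up a $\mu$-twist that would move it into $\mathcal{O}_2$. Since $Z_i = G/H_i$ is assumed $G$-equivariantly oriented in this subsection, the $H_i$-action on $T^*_{[e]}(G/H_i)$ lands in $\mathrm{SO}$ and the spinor spaces $S^\pm(T^*_{[e]}(G/H_i))$ are honest (ungraded-twist) $\tilde H_i$-representations in the $+$ part; making this rigorous — and checking it is compatible with the isomorphism $E_i \cong G\times_{H_i}\mathbb{R}^{n_i}$ used to split off the fiber — is the delicate point. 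Everything else is a direct computation with Clifford modules once the equivariant bundle decomposition along the zero section is in place.
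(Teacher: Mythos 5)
Your proposal is correct and follows essentially the same route as the paper: both reduce via Proposition \ref{sg} to the symbol class of $S(T^{*}(E_i))|_{\mathbb{R}^{n_i}}$, use the $\tilde{H_{i}}$-equivariant splitting $T^{*}(E_{i})|_{\mathbb{R}^{n_{i}}} \cong \pi^{*}(T^{*}_{[e]}(G/H_{i})) \oplus T^{*}(\mathbb{R}^{n_{i}})$ together with the multiplicativity of spinor modules, and then factor the triple as the orbit-direction virtual representation acting on the Bott element. Your additional attention to the $\pm$-grading under the central element $(1,\epsilon)$ is a point the paper leaves implicit, but it does not change the argument.
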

 \begin{proof}
     Since $T^{*}(E_{i})$ has a $G$-equivariant Spin structure, it is also $H_{i}$-equivariant. This $H_{i}$-equivariant Spin structure naturally induces an $\tilde{H_{i}}$-equivariant Spin structure, which makes $T^{*}(E_{i})|_{\mathbb{R}^{n_{i}}}$ an $\tilde{H_{i}}$-Spin bundle.

     Let $\pi: \mathbb{R}^{n_{i}} \rightarrow \{0\}$. We know that $T^{*}(E_{i})|_{\mathbb{R}^{n_{i}}} \cong \pi^{*}(T^{*}_{[e]}(G/H_{i})) \oplus T^{*}(\mathbb{R}^{n_{i}})$ and this decomposition is $\tilde{H_{i}}$-equivariant. Furthermore, since by assumption $T^{*}(\mathbb{R}^{n_{i}})$ has an $\tilde{H_{i}}$-equivariant Spin structure, it follows that $\pi^{*}(T^{*}_{[e]}(G/H_{i}))$ also has an $\tilde{H_{i}}$-equivariant Spin structure. Therefore, $\pi^{*}(S(T^{*}_{[e]}(G/H_{i})))$ is an $\tilde{H_{i}}$-Spin bundle. 
     
     Since both  $\pi^{*}(S(T^{*}_{[e]}(G/H_{i})))$ and $S(T^*(\mathbb{R}^{n_{i}}))$ are $\tilde{H_{i}}$-Spin bundles and \\
     $T^{*}(E_{i})|_{\mathbb{R}^{n_{i}}} \cong \pi^{*}(T^{*}_{[e]}(G/H_{i})) \oplus T^{*}(\mathbb{R}^{n_{i}})$ (this decomposition is $\tilde{H_{i}}$-equivariant), we have  $S(T^*(E_{i}))|_{\mathbb{R}^{n_{i}}}\cong \pi^*(S(T^*_{[e]}(G/H_{i})))\otimes S(T^*(\mathbb{R}^{n_{i}})).$ Since this isomorphism is $\tilde{H_{i}}$-equivariant, we have:
     
     {\small \begin{eqnarray*}
         &&[A_{E_{i}}|_{\mathrm{R}^{n_i}}]\\
         &=&(\mathrm{deg}(A)|_{Z_{i}})[S^+(T^*(E_{i}))|_{\mathbb{R}^{n_{i}}},S^-(T^*(E_{i}))|_{\mathbb{R}^{n_{i}}},c(v)|_{\mathbb{R}^{n_{i}}}]\\
         &=&(\mathrm{deg}(A)|_{Z_{i}})[\pi^*(S^+(T^*_{[e]}(G/H_{i})))\otimes S^{+}(T^*(\mathbb{R}^{n_{i}}))\oplus\pi^*(S^-(T^*_{[e]}(G/H_{i})))\otimes S^-(T^*(\mathbb{R}^{n_{i}})),\\
         &&\pi^*(S^+(T^*_{[e]}(G/H_{i})))\otimes S^{-}(T^*(\mathbb{R}^{n_{i}}))\oplus\pi^*(S^-(T^*_{[e]}(G/H_{i})))\otimes S^+(T^*(\mathbb{R}^{n_{i}})),\mathrm{Id}\otimes c(x)],
     \end{eqnarray*} }
     which equals 
   {\small  \[ (\mathrm{deg}(A)|_{Z_{i}})([\pi^{*}(S^+(T^{*}_{[e]}(G/H_{i})))]-[\pi^{*}(S^-(T^{*}_{[e]}(G/H_{i})))])\cdot[S^+(T^*(\mathbb{R}^{n_{i}})),S^-(T^*(\mathbb{R}^{n_{i}})),c(x)].\]}
     The proof is then completed.\end{proof}
     
 Thus we have:
 \begin{proposition}
 \label{f}
     Denote the elements in $\mathrm{R}(\tilde{H_{i}})\subset R^{\tau_{i}}(H_{i})$ derived from the action of $\tilde{H_{i}}$ on $S^{\pm}(T^{*}_{[e]}(G/H_i))$ as $[s_{i}^{\pm}]$. Following from Proposition~\ref{thom} and Theorem~\ref{main} we have:
     $$((\mathrm{deg}(A)|_{Z_{i}})\cdot([s_{i}^+]-[s_{i}^-]))\otimes \tilde{\beta_{i}}=[A_{E_{i}}|_{\mathbb{R}^{n_i}}]\in K^{0}_{H_{i}}(\mathbb{R}^{n_{i}}).$$
 \end{proposition}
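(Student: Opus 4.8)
The plan is to chain together the three identifications that have already been established earlier in the excerpt, so Proposition~\ref{f} becomes essentially a bookkeeping statement rather than a genuinely new computation. First I would recall that by Theorem~\ref{main} the class $[\hat{A}_{E_i}|_{\mathbb{R}^{n_i}}]\in K^0_{H_i}(\mathbb{R}^{n_i})$ is written as
\[
(\mathrm{deg}(\Xi)|_{Z_i})\bigl([\pi^*(S^+(T^*_{[e]}(G/H_i)))]-[\pi^*(S^-(T^*_{[e]}(G/H_i)))]\bigr)\cdot[S^+(T^*(\mathbb{R}^{n_i})),S^-(T^*(\mathbb{R}^{n_i})),\hat{c}(x)].
\]
Then I would observe that the last factor is by definition the $\tilde H_i$-equivariant Bott element $\tilde\beta_i$ (living in $K^0_{\tilde H_i}(\mathbb{R}^{n_i})^-$, as noted in the statement of Theorem~\ref{main}), and that the representation-theoretic prefactor $([\pi^*(S^+)]-[\pi^*(S^-)])$ is exactly what the notation $[s_i^+]-[s_i^-]$ abbreviates, since $[s_i^\pm]$ was defined to be the class in $\mathrm{R}(\tilde H_i)$ coming from the $\tilde H_i$-action on the spinor spaces $S^\pm(T^*_{[e]}(G/H_i))$, and pullback along $\pi$ is the identification $\mathrm{R}(\tilde H_i)\to K^0_{\tilde H_i}(\mathrm{pt})$.

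The key step is then to reinterpret the multiplication ``$\cdot$'' in the displayed formula from Theorem~\ref{main} as the Kasparov product $\otimes\tilde\beta_i$ of Proposition~\ref{thom}. Concretely, the $\mathrm{R}(\tilde H_i)$-module structure on $K^0_{\tilde H_i}(\mathbb{R}^{n_i})$ — in which an element $a$ of $\mathrm{R}(\tilde H_i)$ acts on $\tilde\beta_i$ by $a\cdot\tilde\beta_i$ — is precisely the map $\otimes\tilde\beta_i\cong i_!$ of Proposition~\ref{thom} applied to $a$. Hence
\[
\bigl((\mathrm{deg}(\Xi)|_{Z_i})\cdot([s_i^+]-[s_i^-])\bigr)\otimes\tilde\beta_i
=(\mathrm{deg}(\Xi)|_{Z_i})\cdot([s_i^+]-[s_i^-])\cdot\tilde\beta_i,
\]
which by the rewriting above is exactly $[\hat{A}_{E_i}|_{\mathbb{R}^{n_i}}]$. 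The one point that needs care is the parity bookkeeping: $\mathrm{deg}(\Xi)|_{Z_i}\in\mathrm{R}(H_i)\cong\mathrm{R}(\tilde H_i)^+$ (it is the ``even'' class produced in Proposition~\ref{sg}), while $[s_i^+]-[s_i^-]$ must lie in the ``odd'' part $\mathrm{R}(\tilde H_i)^-$ so that the product $\tilde\beta_i$-twists into $K^0_{\tilde H_i}(\mathbb{R}^{n_i})^+\cong K^0_{H_i}(\mathbb{R}^{n_i})$ — this is what makes the right-hand side land in $K^0_{H_i}(\mathbb{R}^{n_i})$ as claimed. I would check that the $(1,\epsilon)$-action on the spinor modules $S^\pm(T^*_{[e]}(G/H_i))$ is by $-I$ (because the relevant cotangent space carries a genuine, not projective, spin-type representation of the double cover) so that $[s_i^\pm]\in\mathrm{R}(\tilde H_i)^-$, and that $\mathrm{deg}(\Xi)|_{Z_i}$ being in the even part then puts the whole prefactor in $\mathrm{R}(\tilde H_i)^-$.

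I do not expect any serious obstacle here: the statement is a direct consequence of Theorem~\ref{main} combined with the definition of the Bott element and the module-action description of the Thom isomorphism in Proposition~\ref{thom}, together with the notational convention introduced for $[s_i^\pm]$. The only mildly delicate ingredient is the grading/parity check just described, and that is essentially forced by the structure of Theorem~\ref{h} (the superscript $*$ being $+$ or $-$ according to whether $V_i$ is equivariantly spin), so the proof is short: expand Theorem~\ref{main}, identify factors with $[s_i^\pm]$ and $\tilde\beta_i$, verify the parities, and recognize the product as $\otimes\tilde\beta_i$.
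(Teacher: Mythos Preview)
Your proposal is correct and matches the paper's approach: the paper treats Proposition~\ref{f} as an immediate corollary of Theorem~\ref{main} together with the Thom/Bott description in Proposition~\ref{thom}, giving no argument beyond the phrase ``Following from Definition~\ref{thom} and Theorem~\ref{main}.'' Your write-up is in fact more detailed than the paper's, since you spell out the parity check placing $(\mathrm{deg}(\Xi)|_{Z_i})([s_i^+]-[s_i^-])$ in $\mathrm{R}(\tilde H_i)^-$, which the paper leaves implicit.
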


Finally, we shall prove Theorem~\ref{thm main}:

\begin{proof}[Proof of Proposition~\ref{thm main}~(3)]
By Proposition~\ref{f},
\[
\left(
\deg(A)|_{Z_i}
\right)
\bigl([s_i^+]-[s_i^-]\bigr)
\otimes \tilde{\beta}_i
=
[A_{E_i}|_{\mathbb{R}^{n_i}}]
\in K_{H_i}^0(\mathbb{R}^{n_i}).
\]
Therefore,
\begin{align*}
\left.T\right|_{R^{\tau_i}(H_i)}
\left(
\left(\deg(A)|_{Z_i}\right)
\bigl([s_i^+]-[s_i^-]\bigr)
\right)
&=
\iota_*\circ\mathrm{PD}\circ\mathrm{Inc}
\left(
[A_{E_i}|_{\mathbb{R}^{n_i}}]
\right)
\\
&=
\iota_*\circ\mathrm{PD}
\left(
[A_{E_i}]
\right).
\end{align*}
The first equality follows from Proposition~\ref{f}.
\end{proof}

\section{Declaration of generative AI and AI-assisted technologies in the writing process}

During the preparation of this work the authors used ChatGPT in order to improve the readability and language of the manuscript. After using this tool, the authors reviewed and edited the content as needed and take full responsibility for the content of the published article.

\bibliography{main}{}
\bibliographystyle{amsplain} 

\end{document}